\newtheorem{theorem}{Theorem}[section]
\newtheorem{lemma}[theorem]{Lemma}
\newtheorem{corollary}[theorem]{Corollary}
\newcommand{\dimh}{\operatorname{\dim_{_{\rm H}}}}
\newcommand{\dimM}{\operatorname{\overline{\dim}_{_{\rm M}}}}
\newcommand{\dimp}{\operatorname{\dim_{_{\rm P}}}}
\newcommand{\bDim}{\operatorname{\overline{\textup{B-Dim}}}}
\newcommand{\BDim}{\operatorname{\overline{\textup{Dim}}}}
\newcommand{\Dim}{\operatorname{Dim}}
\newcommand{\1}{\mathbf{1}}
\newcommand{\Pp}{\mathrm{P}}
\newcommand{\E}{\mathrm{E}}
\newcommand{\R}{\mathbf{R}}
\newcommand{\e}{\epsilon}
\renewcommand{\d}{\textup{d}}
\title[Packing Dimension Profiles and L\'evy Processes]{Packing Dimension Profiles and L\'evy Processes}
\author{D.\ Khoshnevisan, R.L.\ Schilling and Y.\ Xiao}
\begin{document}
\maketitle
\begin{abstract}
    We extend the concept of packing dimension profiles, due to Falconer
    and Howroyd (1997) and Howroyd (2001), and use our extension
    in order to determine the packing dimension of an arbitrary image of a general L\'evy process.\\
    \noindent\emph{Keywords:} L\'evy processes, packing dimension,
        packing dimension profiles.
\end{abstract}

\maketitle
\section{Introduction}

Let $X:= \{X(t)\}_{t\ge 0}$ be a L\'evy process in $\R^d$; that is,
$X(0)=0$, $X$ has stationary and independent increments, and
the random function $t\mapsto X(t)$ is almost surely right continuous with left limits
\cite{Applebaum,Bertoin96,Fristedt,Kyprianou,Sato}.

Let $F$ be a nonrandom Borel subset of $\R_+:=[0\,,\infty)$.
It has been known for a long time that the random image set $X(F)$
frequently exhibits fractal structure. And there is a substantial literature
that computes the Hausdorff dimension $\dimh X(F)$ of $X(F)$, see \cite{KX:05} and
its extensive bibliography.

Let $\dimp$ denote the packing dimension.
The main goal of  the present paper is to evaluate $\dimp X(F)$
in terms of  the geometry of $F$. In order to accomplish
this, we shall introduce and study a new family of dimensions related to the set
$F$. Those dimensions are inherently probabilistic, but they have
analytic significance as well. In fact, they can be viewed as an extension of
the notion of packing dimension profiles, as introduced by Falconer and
Howroyd \cite{FalconerHowroyd} to study the packing dimension of
orthogonal projections; see also Howroyd \cite{Howroyd}.

There exists an extensive body of literature related to the Hausdorff dimension
$\dimh X(F)$, but only few papers study the packing dimension
$\dimp X(F)$. Let us point out two noteworthy cases where
$\dimp X(F)$ has been computed successfully in different settings:
\begin{description}
\item[Case 1.]
    When $\dimh F = \dimp F$,
    covering arguments can frequently be used to compute $\dimp X(F)$.
    In those cases, the packing and Hausdorff dimensions of $X(F)$
    generally agree.
\item[Case 2.] When $X$ has statistical self similarities, one can sometimes
    appeal to scaling arguments in order to compute
    $\dimp X(F)$ solely in terms of $\dimp F$.
\end{description}

For an example of the more interesting Case 2
consider the situation where $X$ is an isotropic stable
process on $\R^d$ with index $\alpha\in(0\,,2]$; that is the case where
$\E\exp(i\xi\cdot X(t))=\exp(-\text{const} \cdot t\|\xi\|^\alpha)$
simultaneously for all $t>0$
and $\xi\in\R^d$. Then a theorem
of Perkins and Taylor \cite{PT87} implies that
if $d\ge\alpha$, then
\begin{equation}\label{eq:PT:87}
    \dimp X(F)=\alpha\dimp F\qquad\text{a.s.}
\end{equation}
For related works see also \cite{MeerschaertXiao,XiaoLiu1}.
Up to now, the case $d<\alpha$ has remained open, except
when $\alpha=2$ and $d=1$ [i.e., $X=$ linear Brownian motion].
In that case, a more general theorem of Xiao \cite{Xiao:97} implies
that $\dimp X(F)$ cannot, in general, be described by $\dimp F$;
in fact, Xiao's theorem implies that, when $X$ is linear Brownian motion,
\begin{equation}\label{eq:Xiao:97}
    \dimp X(F)=2\Dim^{\textrm{FH}}_{1/2} F
    \qquad\text{a.s,}
\end{equation}
where $\Dim^{\textrm{FH}}_sF$ denotes the $s$-dimensional packing dimension
profile of Falconer and Howroyd \cite{FalconerHowroyd}. The complexity of
the preceding formula can be appreciated better in light of an example
of Talagrand and Xiao \cite{TX} which shows that there are
sets $F$ such that: (i) $\dimp X(F)\neq \dimh X(F)$; and (ii)
$\dimp X(F)$ cannot be described solely in terms of simple-to-understand
quantities such as $\dimh F$ and $\dimp F$.

The main goal of this paper is to introduce a new family of
dimension profiles; this family includes the packing dimension profiles
of Falconer and Howroyd \cite{FalconerHowroyd}. We use the
dimension profiles of this paper to compute
$\dimp X(F)$ for a general L\'evy process $X$ and an
arbitrary nonrandom
Borel set $F\subset\R_+$. En route we also establish a novel
formula for $\dimM X(F)$, where $\dimM$ denotes the
upper Minkowski dimension.

In order to understand our forthcoming identities better, let us mention
three corollaries of the general results of this paper.

\begin{corollary}\label{co:stable}
    Let $X$ denote a one-dimensional isotropic stable L\'evy process
    with index $\alpha\in(0\,,2]$ and $F\subseteq\R_+$ be
    nonrandom and Borel measurable. Then,
    \begin{equation}
        \dimp X(F) =\alpha\, \Dim^{\textrm{FH}}_{d/\alpha} F
        \qquad\text{a.s.}
    \end{equation}
\end{corollary}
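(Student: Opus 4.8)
The plan is to read off Corollary~\ref{co:stable} from the general formula of this paper, which identifies $\dimp X(F)$ with a packing-dimension profile of $F$ built from the probability kernel
\[
    \kappa_r(s,t):=\Pp\bigl\{|X(t)-X(s)|\le r\bigr\},\qquad s,t\ge 0,\ r>0 .
\]
Once that general identity is granted, the only thing left to do is to evaluate $\kappa_r$ for the isotropic stable process and to recognise the resulting profile as $\alpha$ times the Falconer--Howroyd profile $\Dim^{\textrm{FH}}_{d/\alpha}$. (The corollary is stated with $d=1$, but the computation below is insensitive to $d$, so it equally covers isotropic $\alpha$-stable processes in $\R^d$ for every $d\ge1$.)

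\emph{Step 1: the kernel estimate.} By stationarity and independence of increments $X(t)-X(s)$ has the law of $X(|t-s|)$, and by self-similarity $X(|t-s|)\overset{\mathrm{d}}{=}|t-s|^{1/\alpha}X(1)$. The density of $X(1)$ is bounded, radial and strictly positive near the origin, and $\Pp\{|X(1)|\le1\}>0$; hence $\Pp\{|X(1)|\le\rho\}\asymp\min\{1,\rho^d\}$ for \emph{all} $\rho>0$, with absolute implied constants. Taking $\rho=r\,|t-s|^{-1/\alpha}$ yields
\[
    \kappa_r(s,t)\asymp\min\Bigl\{1\,,\ \frac{r^{d}}{|t-s|^{d/\alpha}}\Bigr\}=\min\Bigl\{1\,,\ \Bigl(\frac{r^{\alpha}}{|t-s|}\Bigr)^{d/\alpha}\Bigr\}
\]
uniformly in $r>0$ and in $s\neq t$. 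Note that this two-sided bound is valid in all three regimes $d<\alpha$, $d=\alpha$ and $d>\alpha$, since it involves only the distribution function of $|X(1)|$ near $0$ and never the $1$-potential density (which may be unbounded or logarithmic).

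\emph{Step 2: change of variables.} The right-hand side above is precisely the Falconer--Howroyd integrand of index $s_0:=d/\alpha$ evaluated at gauge radius $r^{\alpha}$. Substituting the estimate of Step~1 into the definition of the profile attached to $\kappa$ and replacing $r$ by $\rho=r^{\alpha}$ throughout, one finds that the local exponent the profile extracts from $\int\kappa_r(s,\cdot)\,\d\mu$ as $r\downarrow0$ is exactly $\alpha$ times the local exponent the Falconer--Howroyd construction extracts from $\int\min\{1,(\rho/|s-\cdot|)^{s_0}\}\,\d\mu$ as $\rho\downarrow0$; the bounded multiplicative constants from Step~1 are irrelevant because only whether a $\liminf$ vanishes, not its size, enters the construction. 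Hence the $\kappa$-profile of every finite Borel measure $\mu$ supported in $F$ equals $\alpha\,\Dim^{\textrm{FH}}_{d/\alpha}\mu$, and taking the supremum over such $\mu$ (with the usual $\sigma$-stable reduction to compact subsets of $F$) gives the $\kappa$-profile of $F$ as $\alpha\,\Dim^{\textrm{FH}}_{d/\alpha}F$. Together with the general formula this proves the corollary. As consistency checks, $\alpha=2$ and $d=1$ recovers Xiao's identity~\eqref{eq:Xiao:97}, while $\alpha\le d$ recovers~\eqref{eq:PT:87} because $\Dim^{\textrm{FH}}_{s_0}=\dimp$ whenever $s_0\ge1$.

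The computation is elementary; the one point that genuinely requires care is the bookkeeping in Step~2 --- aligning the present paper's precise definition of its profile with the Falconer--Howroyd definition (not merely with some variant of it), and verifying that the substitution $\rho=r^{\alpha}$ commutes both with the ``$\mu$-almost every $t$'' quantifier and with the passage from measures carried by $F$ to the set $F$ itself. Both are routine once the definitions are unwound, since $\rho=r^{\alpha}$ neither moves the measure $\mu$ nor alters the relevant exceptional null set and is an increasing bijection of $(0,1)$ onto itself.
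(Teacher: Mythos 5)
Your proposal is correct and follows essentially the same route as the paper: the two-sided scaling estimate $\kappa_\e(t)\asymp\bigl((\e^\alpha/t)\wedge 1\bigr)^{d/\alpha}$, the substitution $\rho=\e^\alpha$ to identify $\BDim_\kappa F=\alpha\,\bDim_{d/\alpha}F$, and then an appeal to the general identity \eqref{eq:dimp}. The one place where you understate the work is the final ``alignment'' step: the paper's profile $\Dim_\kappa$ is defined by regularizing $\BDim_\kappa$ over countable coverings (an infimum over coverings of energy-type box profiles), whereas $\Dim^{\textrm{FH}}_{s}$ is defined via suprema over measures of essential infima of pointwise potential exponents; their coincidence is not something one ``unwinds from the definitions'' but is precisely Theorem 4.1 of Khoshnevisan--Xiao \cite{KX:07a} (extending Howroyd), which the paper cites at this exact point. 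Your Step 2, which speaks of ``the $\kappa$-profile of a measure $\mu$'' and then takes a supremum over $\mu$, silently presupposes that equivalence, since the paper's $\Dim_\kappa$ is not defined through measures at all; with that citation supplied, your argument is complete.
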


The preceding includes both \eqref{eq:PT:87} and
\eqref{eq:Xiao:97} as special cases. Indeed, one
obtains \eqref{eq:PT:87}  because $\Dim_s^{\textrm{FH}}F=\dimp F$
when $s\ge 1$; see Falconer and Howroyd \cite{FalconerHowroyd}.
And one obtains \eqref{eq:Xiao:97} by setting $d=1$ and $\alpha=2$.
We mention that even in the preceding setting, the extension from
$\alpha=2$ to $\alpha<2$ is not trivial, since in the latter case
$t\mapsto X(t)$ is pure jump. And this will force us to develop new ideas
to handle pure jump processes,  even when
$X$ is $\alpha$-stable.

In order to describe our next two corollaries, let us recall that a
stochastic process $S:=\{S(t)\}_{t\ge 0}$ is a \emph{subordinator}
if $S$ is a one-dimensional L\'evy process such that
the random function $t\mapsto S(t)$
is nondecreasing. Also recall that the \emph{Laplace exponent} $\Phi$
is $S$ is given by $\E{\rm e}^{-\lambda S(t)}={\rm e}^{-t\Phi(\lambda)}$
for every $t,\lambda>0$; see Bertoin \cite{bertoin:99} for more detailed
information about subordinators and their remarkable properties.

\begin{corollary}\label{co:HA:subordinator}
    Suppose that $S$ is a subordinator with Laplace exponent $\Phi$,
    and $F\subseteq\R_+$ is nonrandom and Borel measurable.
    Then, a.s.,
    \begin{equation}\label{eq:HA:subordinator}
    	\dimM S(F)\\
	=\sup\left\{\eta>0:\
	        \varlimsup_{\lambda\uparrow\infty} \lambda^\eta
	        \inf_{\nu\in\mathcal{P}_c(F)} \int\int
	        {\rm e}^{-|t-s|\Phi(\lambda)}\, \nu(\d s)\,\nu(\d t)=0
	        \right\},
    \end{equation}
    where $\mathcal{P}_c(F)$ denotes the collection of all
    compactly supported Borel probability measures $\nu$ such that
    $\nu(F)=1$.
\end{corollary}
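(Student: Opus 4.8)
The plan is to derive Corollary~\ref{co:HA:subordinator} by specializing to $X=S$ the formula for $\dimM X(F)$ that the paper establishes for a general L\'evy process, and then simplifying the dimension profile it produces using two features of a subordinator: its paths are nondecreasing, and its one-dimensional marginals are governed by the Laplace exponent $\Phi$. That general formula identifies $\dimM X(F)$ with the supremum of all $\eta>0$ for which $\varlimsup_{\lambda\uparrow\infty}\lambda^\eta\inf_{\nu\in\mathcal{P}_c(F)}\int\int\kappa_\lambda(t,s)\,\nu(\d s)\,\nu(\d t)=0$, where $\kappa_\lambda$ is the relevant kernel of $X$ at scale $1/\lambda$; for the subordinator $S$ one may take $\kappa_\lambda(t,s)=\Pr\{|S(t)-S(s)|\le1/\lambda\}$. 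Thus everything reduces to showing that replacing this kernel by ${\rm e}^{-|t-s|\Phi(\lambda)}$ does not alter the value of the supremum. We may assume $F$ bounded, the general case following by the usual localization (recall $S(t)\to\infty$ a.s.\ unless $S\equiv0$).

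Two elementary observations do most of the work. First, since $t\mapsto S(t)$ is nondecreasing with stationary, independent increments, $|S(t)-S(s)|=S(s\vee t)-S(s\wedge t)$ has the law of $S(|t-s|)$, so $\kappa_\lambda(t,s)=\Pr\{S(|t-s|)\le1/\lambda\}$. Second, Markov's inequality applied to ${\rm e}^{-\lambda S(r)}$, whose mean is ${\rm e}^{-r\Phi(\lambda)}$, gives on the one hand
\[
	\Pr\{S(r)\le1/\lambda\}=\Pr\{{\rm e}^{-\lambda S(r)}\ge{\rm e}^{-1}\}\le{\rm e}\,{\rm e}^{-r\Phi(\lambda)},
\]
and on the other hand, from ${\rm e}^{-r\Phi(\lambda)}=\E{\rm e}^{-\lambda S(r)}\le\Pr\{S(r)\le1/\lambda\}+{\rm e}^{-1}$,
\[
	\Pr\{S(r)\le1/\lambda\}\ \ge\ {\rm e}^{-r\Phi(\lambda)}-{\rm e}^{-1}\ \ge\ c_0\,\1\{r\Phi(\lambda)\le\tfrac12\},\qquad c_0:={\rm e}^{-1/2}-{\rm e}^{-1}>0.
\]
So $\kappa_\lambda$ is trapped, uniformly in $r$ and $\lambda$, between a constant multiple of the truncated kernel $\1\{|t-s|\Phi(\lambda)\le\tfrac12\}$ and a constant multiple of ${\rm e}^{-|t-s|\Phi(\lambda)}$, and it remains to see that these three kernels all yield the same profile.

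Set $\Q(\lambda):=\inf_{\nu\in\mathcal{P}_c(F)}\int\int\Pr\{S(|t-s|)\le1/\lambda\}\,\nu(\d s)\,\nu(\d t)$. The first Markov bound already gives $\int\int{\rm e}^{-|t-s|\Phi(\lambda)}\,\nu(\d s)\,\nu(\d t)\ge{\rm e}^{-1}\Q(\lambda)$ for every $\nu$. For the opposite estimate I would combine the layer-cake bound ${\rm e}^{-r\Phi(\lambda)}\le\sum_{k\ge0}{\rm e}^{-k}\1\{r\Phi(\lambda)<k+1\}$ with the elementary fact that, for any probability measure $\nu$, any $\rho>0$, and any $c\ge1$,
\[
	\int\int\1\{|t-s|<c\rho\}\,\nu(\d s)\,\nu(\d t)\ \le\ (2\lceil c\rceil+1)\int\int\1\{|t-s|<\rho\}\,\nu(\d s)\,\nu(\d t),
\]
which follows by partitioning $\R$ into intervals of length $\rho$ and applying $ab\le\tfrac12(a^2+b^2)$ cell by cell. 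Summing the resulting geometric series bounds $\int\int{\rm e}^{-|t-s|\Phi(\lambda)}\,\nu(\d s)\,\nu(\d t)$ by a constant multiple of $\int\int\1\{|t-s|<1/\Phi(\lambda)\}\,\nu(\d s)\,\nu(\d t)$, which, using the same grid inequality and the lower bound on $\kappa_\lambda$ above, is in turn at most a constant multiple of $\int\int\Pr\{S(|t-s|)\le1/\lambda\}\,\nu(\d s)\,\nu(\d t)$. Taking infima over $\nu\in\mathcal{P}_c(F)$ yields finite constants $c_1,c_2>0$, independent of $\lambda$ (and of $F$), with $c_1\Q(\lambda)\le\inf_{\nu\in\mathcal{P}_c(F)}\int\int{\rm e}^{-|t-s|\Phi(\lambda)}\,\nu(\d s)\,\nu(\d t)\le c_2\Q(\lambda)$; incidentally all these quantities are comparable to $1/N_{1/\Phi(\lambda)}(F)$, the reciprocal box-counting function of $F$, since $\inf_{\nu\in\mathcal{P}_c(F)}\int\int\1\{|t-s|<\rho\}\,\nu(\d s)\,\nu(\d t)\asymp1/N_\rho(F)$. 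Since multiplying the quantity inside $\varlimsup_{\lambda\uparrow\infty}\lambda^\eta(\,\cdot\,)$ by a factor lying in the fixed range $[c_1,c_2]$ cannot change whether that limit superior is zero, the set of admissible $\eta$, and hence its supremum, is the same for the kernel $\Pr\{S(|t-s|)\le1/\lambda\}$ as for ${\rm e}^{-|t-s|\Phi(\lambda)}$, which gives \eqref{eq:HA:subordinator}.

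The step I expect to be the crux is the upper half of this energy comparison. The kernels ${\rm e}^{-r\Phi(\lambda)}$ and $\Pr\{S(r)\le1/\lambda\}$ are \emph{not} comparable pointwise in $r$---for a general subordinator the exponential kernel has a far heavier tail in $|t-s|$---so the replacement can be carried out only \emph{after} integrating against $\nu$ and taking the infimum, which is exactly what the dyadic/grid estimate above is arranged to accomplish. A secondary point needing attention is the interface with the general $\dimM$-formula: one must check that its hypotheses hold for an arbitrary subordinator and that the scale it attaches to the parameter $\lambda$ is the one normalized by $\E{\rm e}^{-\lambda S(t)}={\rm e}^{-t\Phi(\lambda)}$. (A self-contained proof avoiding the general formula would instead analyze $N_\delta(S(F))$ directly, via the partition of $\R_+$ into the successive intervals on which $S$ lies in $[k\delta,(k+1)\delta)$; but the lengths of those intervals are random and heavy-tailed, which makes that route appreciably more delicate.)
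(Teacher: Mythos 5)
Your argument is correct, but it takes a genuinely different route from the paper's. The paper proves the corollary by combining Theorem \ref{th:dimM} with the harmonic-analytic Theorem \ref{th:HA}: introducing an independent standard symmetric Cauchy process and using $\E\,{\rm e}^{iuC(\lambda)}={\rm e}^{-\lambda|u|}$ with $u=S(t)\ge 0$, it obtains the \emph{exact} identity
\[
	{\rm e}^{-|t-s|\Phi(\lambda)}
	=\frac1\pi\int_{-\infty}^\infty
	\frac{{\rm e}^{-|t-s|\Psi(\textrm{sgn}(t-s)\lambda z)}}{1+z^2}\,\d z,
\]
so that $\int\int {\rm e}^{-|t-s|\Phi(1/\e)}\,\nu(\d s)\,\nu(\d t)$ coincides, with no comparison constants, with the Cauchy-weighted energy $\int f_C(z)\,\mathcal{E}_\nu(z/\e)\,\d z$ appearing in \eqref{eq:th:HA}; the corollary is then immediate. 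You bypass Theorem \ref{th:HA} entirely and instead compare the defining kernel $\kappa_\e(r)=\Pp\{S(r)\le\e\}$ of \eqref{Eq:BD} with ${\rm e}^{-r\Phi(1/\e)}$ by hand. Your two Markov-inequality bounds are correct, and you rightly isolate the real difficulty: the exponential kernel is not dominated pointwise by a constant multiple of $\kappa_\e$ (a subordinator with positive drift has $\Pp\{S(r)\le\e\}=0$ for large $r$), so the comparison must be made at the level of energies. Your layer-cake decomposition combined with the grid/doubling inequality for the truncated kernel accomplishes exactly that, with constants uniform in $\nu$ and $\lambda$, which is all that matters for the critical exponent; I checked the doubling inequality and the geometric series and they are sound. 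What each approach buys: the paper's subordination identity is shorter, exact, and is the same device that drives Theorem \ref{th:HA} and Corollary \ref{co:Dim:Sub}; your route is more elementary and self-contained, using only \eqref{eq:dimM} and Definition \ref{def:BD}, and it explains directly why the Laplace exponent suffices for a monotone process. Two small caveats, both inherited from the paper rather than introduced by you: the $\varlimsup_{\lambda\uparrow\infty}$ in \eqref{eq:HA:subordinator} corresponds to a $\varliminf_{\e\downarrow 0}$ in \eqref{Eq:BD}, so one should fix which normalization is intended; and the passage from a general Borel $F$ with $\mathcal{P}_c(F)$ to the bounded sets required by Theorem \ref{th:dimM} is only gestured at in both arguments.
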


Consider Corollary \ref{co:HA:subordinator} in the case that $F$ is
an interval; say $F:=[0\,,1]$. Then it is intuitively plausible---and
possible to prove rigorously---that the minimizing measure $\nu$
in the infiumum ``$\inf_{\nu\in\mathcal{P}_c(F)}$''
is the Lebesgue measure on $[0\,,1]$. A direct calculation then
implies that the convergence condition
of \eqref{eq:HA:subordinator} holds if, and only if,
$\lambda^\eta=o(\Phi(\lambda))$ as $\lambda\uparrow\infty$.
Therefore, Corollary \ref{co:HA:subordinator} yields the following
elegant a.s.\ identity:
\begin{equation}\label{eq:dimM:subord}
    \dimM S([0\,,1])=\sup\left\{\eta>0:\
    \varliminf_{\lambda\uparrow\infty}\frac{\Phi(\lambda)}{\lambda^\eta}=\infty\right\}
    \qquad\text{a.s.}
\end{equation}
And a Baire-category argument can be used to prove that the same
formula holds if we replace $\dimM S([0\,,1])$ by $\dimp S([0\,,1])$;
see also \eqref{eq:dimp} below.
The preceding formulas for $\dimM S([0\,,1])$
and $\dimp S([0\,,1])$ were derived earlier, using covering
arguments;
see Fristedt and Taylor \cite{FristedtTaylor}
and Bertoin \cite[Lemma 5.2, p.\ 41]{bertoin:99}.

We are not aware of any nontrivial
examples of deterministic sets with explicitly known Falconer--Howroyd
packing dimension profiles. Remarkably,
our third and final corollary computes the Falconer--Howroyd
packing dimension profiles of a quite-general
``Markov random set'' in the sense of Krylov and Juskevi\v{c}
\cite{KrylovJuskevic64,KrylovJuskevic65}; see also
Hoffmann--J\o{}rgensen \cite{HoffmannJorgensen} and
Kingman \cite{Kingman}. According to a deep result of
Maisonneuve \cite{Maisonneuve}, a Markov random set is the closure of
$S(\R_+)$, where $S$ is a subordinator. It is not hard to see that
any reasonable dimension of the closure of
$S(\R_+)$ is a.s.\ the same as the
same dimension of $S([0\,,1])$. Therefore, our next
corollary concentrates on computing
many of the Falconer--Howroyd packing dimension profiles of $S([0\,,1])$.

\begin{corollary}\label{co:Dim:Sub}
    If $S$ is a subordinator with Laplace exponent $\Phi$, then for
    every $s\ge 1/2$,
    \begin{equation}\label{eq:Dim:Sub}
        \Dim^{\rm FH}_s  S([0\,,1]) = s(1-\theta)
        \qquad\text{a.s.,}
    \end{equation}
    where
    \begin{equation}
        \theta := \varliminf_{\lambda\uparrow\infty}\frac{1}{\log\lambda}
        \log\left( \int_1^\lambda \frac{\d x}{\Phi(x^{1/s})} \right).
    \end{equation}
\end{corollary}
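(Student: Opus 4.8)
The plan is to derive this from Corollary~\ref{co:stable} via subordination. Fix $s\ge 1/2$ and set $\alpha:=1/s\in(0\,,2]$; let $X=\{X(t)\}_{t\ge0}$ be a one-dimensional isotropic $\alpha$-stable L\'evy process, normalized so that $\E\exp(i\xi X(t))=\exp(-t|\xi|^\alpha)$, and take $X$ independent of the subordinator $S$. Corollary~\ref{co:stable}, applied with this $\alpha$, reads $\dimp X(F)=\alpha\,\Dim^{\rm FH}_{s}F=s^{-1}\Dim^{\rm FH}_sF$ a.s.\ for every \emph{nonrandom} Borel $F\subseteq\R_+$. Since $S$ is independent of $X$, condition on $S$: for a.e.\ realization, $S([0\,,1])$ is a Borel set (it differs by at most countably many points from its closure, which is compact), and conditionally on $S$ the process $X$ is still isotropic $\alpha$-stable, so the conditional version of the identity holds a.s.; an application of Fubini's theorem --- legitimate because $\dimp X(S([0\,,1]))$ and $\Dim^{\rm FH}_sS([0\,,1])$ are jointly measurable in the realizations of $X$ and $S$ --- upgrades this to
\begin{equation}\label{eq:pl1}
	\Dim^{\rm FH}_sS([0\,,1])=s\cdot\dimp X\!\bigl(S([0\,,1])\bigr)\qquad\text{a.s.}
\end{equation}
Now $X(S([0\,,1]))=Z([0\,,1])$ where $Z:=X\circ S$, and by Bochner's subordination $Z$ is again a L\'evy process --- symmetric, on $\R$ --- with characteristic exponent
\begin{equation}\label{eq:pl2}
	\psi(\xi):=\Phi\bigl(|\xi|^{\alpha}\bigr)=\Phi\bigl(|\xi|^{1/s}\bigr),
\end{equation}
because $\E\exp(i\xi Z(t))=\E\exp(-S(t)|\xi|^{\alpha})=\exp(-t\,\Phi(|\xi|^{\alpha}))$ for all $t,\xi>0$. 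So it remains to prove the a.s.\ identity $\dimp Z([0\,,1])=1-\theta$, since $\psi(x)=\Phi(x^{1/s})$ identifies $\theta$ as the liminf in the statement.

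To compute $\dimp Z([0\,,1])$ I would invoke the main packing-dimension theorem of this paper --- together with its upper Minkowski companion, of which Corollary~\ref{co:HA:subordinator} is the subordinator case --- for the L\'evy process $Z$ and the \emph{nonrandom} set $[0\,,1]$. The crucial point is that, for an interval, the variational (energy-type) quantities appearing in those formulas are governed asymptotically by \emph{Lebesgue measure on $[0\,,1]$}: one inequality is immediate (test with Lebesgue measure), and the matching one follows from a convexity/rearrangement estimate showing that among $\nu\in\mathcal{P}_c([0\,,1])$ Lebesgue measure asymptotically minimizes the relevant energies. Inserting Lebesgue measure and carrying out the ensuing Fourier/Tauberian calculation --- in essence, $\int_0^1\mathrm{P}\{|Z(u)|\le 1/\lambda\}\,\d u$ and its scale-localized variants are, up to logarithmic equivalence, comparable to $\lambda^{-1}\int_1^\lambda\d x/\psi(x)$ --- makes the Minkowski formula explicit; a Baire-category/regularization argument, of exactly the kind that passes from \eqref{eq:dimM:subord} to its packing-dimension analogue for subordinators, then produces the $\varliminf$ and yields $\dimp Z([0\,,1])=1-\theta$ a.s. Together with \eqref{eq:pl1} this gives $\Dim^{\rm FH}_sS([0\,,1])=s(1-\theta)$. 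As consistency checks: for $s\ge1$ one has $\Dim^{\rm FH}_s=\dimp$ and $\psi(x)=\Phi(x^{1/s})=O(x)$, and the identity reduces to the classical formula of Fristedt--Taylor and Bertoin for $\dimp S([0\,,1])$; and for $s=1/2$ (so $\alpha=2$) it is \eqref{eq:Xiao:97} applied to the random set $S([0\,,1])$.

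The main obstacle is precisely the interval computation in the second paragraph: proving that Lebesgue measure is the asymptotically extremal measure for $[0\,,1]$ (this is what makes the abstract dimension profiles collapse to the explicit integral $\int_1^\lambda\d x/\psi(x)$), performing the Tauberian reduction with care since $\psi$ is merely a symmetric L\'evy exponent --- not necessarily monotone or regularly varying --- and then running the Baire-category regularization that converts the upper Minkowski statement into the packing-dimension statement (with its $\varliminf$) for the non-monotone process $Z=X\circ S$. A secondary, routine point is the conditioning-and-Fubini step needed to apply Corollary~\ref{co:stable} with the \emph{random} set $S([0\,,1])$ in place of a deterministic $F$.
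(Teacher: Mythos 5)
Your overall strategy is the same as the paper's: set $\alpha=1/s$, subordinate an independent symmetric $\alpha$-stable process $X$ to $S$, apply Corollary~\ref{co:stable} conditionally on $S$ (this is exactly the paper's step \eqref{eq:dis2}, and your Fubini/measurability remarks are the right way to justify it for the random set $S([0,1])$), and reduce everything to computing $\dimp Z([0,1])$ for the L\'evy process $Z=X\circ S$ with exponent $\Phi(|\xi|^{1/s})$. Where you diverge is in that last computation. The paper does not re-derive the range formula from Theorem~\ref{th:dimM}: it simply quotes Theorem~1.1 of \cite{KX:07b}, which already gives
\begin{equation}
	\dimp Z([0\,,1])=\varlimsup_{r\downarrow 0}\frac{1}{\log r}\,
	\log\left(\int_0^\infty\frac{\d x}{(1+x^2)\bigl(1+\Phi((x/r)^\alpha)\bigr)}\right)
	\qquad\text{a.s.,}
\end{equation}
so the extremality of Lebesgue measure, the Tauberian step, and the Baire-category regularization --- the three items you flag as your ``main obstacle'' --- are all absorbed into a citation and never need to be carried out for the non-monotone process $Z$. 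What remains in the paper, and what your sketch compresses into the single clause ``comparable to $\lambda^{-1}\int_1^\lambda \d x/\psi(x)$,'' is the actual content of the proof: splitting the integral over $(0,r)$, $(r,1)$ and $(1,\infty)$, checking that the first region contributes only $O(r)=O(f(r))$ and the third is dominated by $f(r):=r\int_1^{1/r}\d x/\Phi(x^\alpha)$ via the bound $f(r)\ge 1/\Phi(r^{-\alpha})$, so that the $\varlimsup$ equals $1-\theta$. Your route is viable but amounts to re-proving the cited theorem of \cite{KX:07b} from Theorem~\ref{th:dimM}, which is substantially more work than the problem requires and is left unexecuted in your write-up; if you instead cite that result, your argument collapses onto the paper's, with the three-region estimate as the only step still to be supplied in detail.
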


\begin{remark}
    The preceding should be compared with the following:
    With probability one:
    \begin{equation}\begin{split}
        \dimp S([0\,,1]) &= \varlimsup_{\lambda\uparrow\infty}
            \frac{\log\Phi(\lambda)}{\log\lambda};\qquad\text{and}\\[\bigskipamount]
        \dimh S([0\,,1]) &= \varliminf_{\lambda\uparrow\infty}
            \frac{\log\Phi(\lambda)}{\log\lambda}.
    \end{split}\end{equation}
    See, for example, Fristedt and Taylor \cite{FristedtTaylor},
    as well as Bertoin \cite[Lemma 5.2, p.\ 41, Corollary 5.3, p.\ 42]{bertoin:99}.
    See also \eqref{eq:dimM:subord} above and see \cite{KX:07b} for
    very general results.
\end{remark}

\section{Analytic preliminaries and the main result}

In this section we introduce a family of generalized
packing dimension profiles that are associated to a L\'evy process
in a natural way. As we shall see later, these profiles include
the packing dimension profiles of Falconer and Howroyd
\cite{FalconerHowroyd} and Howroyd \cite{Howroyd}.
We mention also that it has been
shown in \cite{KX:07a} that the packing dimension profiles of
Falconer and Howroyd \cite{FalconerHowroyd} and those of Howroyd \cite{Howroyd}
coincide. See also Howroyd \cite{Howroyd} for a special case of
the latter result.

\subsection{Packing dimension profiles}
Recall that $X:=\{X(t)\}_{t\ge 0}$ is an arbitrary but fixed L\'evy process on $\R^d$.
If $|y|:=\max_{1\le j\le d}|y_j|$ designates the $\ell^\infty$ norm of
a vector $y\in\R^d$, then we may consider the family
\begin{equation}\label{eq:kappa}
    \kappa :=\{\kappa_\e\}_{\e\ge 0}
\end{equation}
of functions that are defined by
\begin{equation}\label{eq:kappa2}
    \kappa_\e(t) := \Pp\left\{ X(t)\in B(0\,,\e)\right\}
    \qquad\text{for all  $\e,t\ge 0$.}
\end{equation}
Here and throughout $B(x\,,r):=\{z\in\R^d:\ |z-x|<r\}$ denotes the
open $\ell^\infty$ ball of radius $r>0$ about $x\in\R^d$.

One can see at once that $\kappa_\e(t)$ is continuous in $t$ for every fixed $\e$,
and nondecreasing in $\e$ for every fixed $t$.

\begin{definition}\label{def:BD}
    We define the \emph{box-dimension profile} $\BDim_\kappa F$ of a Borel set
    $F\subseteq\R$ with respect to the family $\kappa$ as follows:
    \begin{equation} \label{Eq:BD}
        \BDim_\kappa F  := \sup
        \left\{ \eta>0:\,  \varliminf_{\e\downarrow 0}
        \inf_{\nu\in\mathcal{P}(F)}
        \int\int\frac{\kappa_\e(|t-s|)}{\e^\eta} \,
        \nu(\d s)\,\nu(\d t)
        =0 \right\},
    \end{equation}
    where $\nu\in\mathcal{P}(F)$ if, and only if, $\nu$ is a Borel
    probability measure on $\R$ such that  $\nu(F) = 1$.
\end{definition}

It is possible to express $\BDim_\kappa F$
in potential-theoretic terms. Indeed,
\begin{equation} \label{Eq:BD0}
    \BDim_\kappa F  = \varlimsup_{\e \to 0} \frac{\log Z_\kappa(\e)}
    {\log \e},
\end{equation}
where $Z_\kappa(\e)$ is the minimum $\kappa_\epsilon$-energy of $\nu \in \mathcal{P}(F)$;
viz.,
\begin{equation}
    Z_\kappa(\e) := \inf_{\nu\in\mathcal{P}(F)} \int\int\kappa_\e(|t-s|)\,
    \nu(\d s)\,\nu(\d t).
\end{equation}
Eq.\ \eqref{Eq:BD0} is reminiscent of, but not the same as, Howroyd's
upper box-dimension with respect to a kernel \cite{Howroyd}.

The box-dimension profiles $\BDim_\kappa$ can be regularized in
order to produce a proper family of packing-type dimensions.

\begin{definition}\label{def:Dimka}
    We define the \emph{packing dimension profile $\Dim_\kappa F$
    of an arbitrary set $F\subseteq\R$ with respect to
    the family $\kappa$} as follows:
    \begin{equation}\label{Def:Dimka}
        \Dim_\kappa F := \inf
        \sup_{n\ge 1} \BDim_\kappa  F_n,
    \end{equation}
    where the infimum is taken over all countable coverings
    of $F$ by bounded Borel sets
    $F_1,F_2,\ldots$
\end{definition}

One can verify from this definition that $\Dim_\kappa$ has the
following properties that are expected to hold for any reasonable
notion of ``fractal dimension'':
\begin{itemize}
\item[(i)]\ $\Dim_\kappa$ is \emph{monotone}. Namely,
    $\Dim_\kappa F \le  \Dim_\kappa G$ whenever
    $F \subseteq G$;
\item[(ii)]\ $\Dim_\kappa$ is \emph{$\sigma$-stable}. Namely,
    $\Dim_\kappa  \cup_{n=1}^\infty G_n  =
    \sup_{n \ge 1} \Dim_\kappa G_n.$
\end{itemize}
We skip the verification of these properties, as they require routine arguments.

\subsection{A relation to a family of packing measures}
Next we outline how $\Dim_\kappa$ can be associated to a
packing measure with respect to the family $\kappa$,
where $\kappa$ was defined in \eqref{eq:kappa}
and \eqref{eq:kappa2}.

\begin{definition}
    Fix a set $F \subseteq \R$ and a number $\delta>0$.
    We say that a sequence $\{(w_j\,, t_j\,, \e_j):\, j \ge 1\}$ of triplets is a
    \emph{$(\kappa\,, \delta)$-packing of $F$} if for all $j\ge 1$:
    (a) $w_j \ge 0$;
    (b) $t_j \in F$;
    (c) $\e_j \in (0\,, \delta)$; and
    (d) $\sum_{i=1}^\infty w_i \kappa_{\e_i}(|t_i - t_j|) \le 1$.
\end{definition}

The preceding general definition is modeled after the ideas of
\cite{Howroyd}, and leads readily to packing measures. Indeed, we
have the following.

\begin{definition}\label{def:Pmeaska}
    For a given constant $s > 0$, we define the \emph{$s$-dimensional
    packing measure ${\mathcal P}^{s, \kappa}(F)$ of
    $F\subset\R$ with respect to the family $\kappa$} as
    \begin{equation}
        {\mathcal P}^{s, \kappa}(F) := \inf
        \sup_{n\ge 1}   {\mathcal P}^{s, \kappa}_0 (F_n),
    \end{equation}
    where the infimum is taken over all bounded Borel sets $F_1,F_2,\ldots$
    such that $F\subseteq\cup_{n=1}^\infty F_n$,
    and ${\mathcal P}^{s, \kappa}_0$ denotes a so-called \emph{premeasure}
    that is defined by 
    \begin{equation}
        {\mathcal P}^{s, \kappa}_0(F) := \lim_{\delta \downarrow 0}\left(
        \sup \sum_{j= 1}^\infty w_j \e_j^s\right),
    \end{equation}
    where the supremum is taken over all $(\kappa\,,\delta)$-packings
    $\{(w_j\,,t_j\,,\e_j):\, j\ge 1\}$ of $F$ and  $\sup \varnothing := 0$,
    as usual.
    \end{definition}

\begin{definition}\label{def:Pmeaska2}
    We define the \emph{packing dimension
    $\hbox{\rm P-}{\dim}_{\kappa} F$ of $F\subset\R$
    with respect to the family $\kappa$}
    as $\hbox{\rm P-}{\dim}_{\kappa} F :=
    \inf \{s > 0:\, {\mathcal P}^{s,\kappa}(F) =0\}.$
\end{definition}

It is possible to adapt the proof of Theorem 26 of
Howroyd's paper \cite{Howroyd} and show that our two packing
dimension profiles coincide. That is,
\begin{equation}
    \Dim_\kappa F=\hbox{\rm P-}{\dim}_{\kappa} F
    \qquad\text{for all $F\subset\R$}.
\end{equation}
We omit the proof, as it requires only an adaptation of ideas of Howroyd
\cite[proof of Theorem 26]{Howroyd} to the present,
more general, setting.

\subsection{A relation to harmonic analysis}

There are time-honored, as well as deep, connections between Hausdorff
measures and harmonic analysis. In this section we establish a useful
harmonic-analytic result about the packing measures of this
section.

Let $\Psi$ denote the characteristic exponent of $X$, normalized so that
\begin{equation}
    \E{\rm e}^{iz\cdot X(t)} = {\rm e}^{-t\Psi(z)}
    \qquad\text{for all $t\ge 0$ and $z\in\R^d$.}
\end{equation}
For every Borel probability measure $\nu$ on $\R$, and for all $z\in\R$,
define the energy form,
\begin{equation}\label{eq:E_rho}
    \mathcal{E}_\nu(z) := \int\int \exp\left(
    -|t-s|\Psi \left( \textrm{sgn}(t-s) z\right) \right)\, \nu(\d s)\,\nu(\d t).
\end{equation}

Note that
\begin{equation}
	0\le \mathcal{E}_\nu(z)\le 1\qquad
	\text{for all $z\in\R^d$ and $\nu\in\mathcal{P}(F)$.}
\end{equation}
This can be seen from the following computation
\begin{equation}\begin{split}
	\mathcal{E}_\nu(z) &=\int\int \E\left[ {\rm e}^{%
		iz\cdot\{ X(t)-X(s)\}}
		\right]\, \nu(\d s)\,\nu(\d t)\\
	&= \E\left( \left|\int {\rm e}^{iz\cdot X(t)}\, \nu(\d t)\right|^2 \right),
\end{split}\end{equation}
where we used Fubini's theorem to interchange the order of the integrals.
This proves that $\mathcal E_\nu$ is real-valued and positive;
the fact that $\mathcal E_\nu(z)\leq 1$ is now obvious.

\begin{theorem}\label{th:HA}
    For every compact set $F\subset\R_+$,
    \begin{equation}\label{eq:th:HA}
        \BDim_\kappa F  = \sup\left\{\eta>0:\
        \varliminf_{\e\downarrow 0}
        \inf_{\nu\in\mathcal{P}(F)}
        \int_{\R^d} \frac{\e^{-\eta}\,\mathcal{E}_\nu(z/\e)}{\prod_{j=1}^d
        (1+z_j^2)}\, \d z=0 \right\}.
    \end{equation}
\end{theorem}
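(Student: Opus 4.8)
The plan is to rewrite the right-hand side of \eqref{eq:th:HA} as a ``smoothed'' energy, and then to compare the smoothing kernel with $\kappa_\e$ directly. Fix $\e>0$ and let $p_\e(x):=\prod_{j=1}^{d}(2\e)^{-1}{\rm e}^{-|x_j|/\e}$ for $x\in\R^d$; this is a probability density whose Fourier transform is $\widehat p_\e(z)=\prod_{j=1}^{d}(1+\e^2z_j^2)^{-1}$, so that $\widehat p_\e(z/\e)$ is precisely the weight appearing in \eqref{eq:th:HA}. Introduce the smoothed kernel $\widetilde\kappa_\e(t):=\E[p_\e(X(t))]$ and the energies
\[
	G_\nu(\e):=\int\int\kappa_\e(|t-s|)\,\nu(\d s)\,\nu(\d t),\qquad
	\widetilde G_\nu(\e):=\int\int\widetilde\kappa_\e(|t-s|)\,\nu(\d s)\,\nu(\d t).
\]
Starting from the representation $\mathcal{E}_\nu(z)=\E(|\int{\rm e}^{iz\cdot X(t)}\,\nu(\d t)|^2)$ noted just before the theorem, a change of variable $z\mapsto\e z$, Plancherel's theorem, and Tonelli's theorem (all integrands being nonnegative, the interchanges of $\E$ with the $\d z$-integral are justified), together with stationarity of the increments and the evenness of $p_\e$, yield the exact identity
\[
	\int_{\R^d}\frac{\e^{-\eta}\,\mathcal{E}_\nu(z/\e)}{\prod_{j=1}^{d}(1+z_j^2)}\,\d z=(2\pi)^d\,\e^{d-\eta}\,\widetilde G_\nu(\e).
\]
Hence the right-hand side of \eqref{eq:th:HA} equals $\sup\{\eta>0:\ \varliminf_{\e\downarrow0}\inf_{\nu\in\mathcal{P}(F)}\e^{d-\eta}\widetilde G_\nu(\e)=0\}$, whereas $\BDim_\kappa F=\sup\{\eta>0:\ \varliminf_{\e\downarrow0}\inf_{\nu\in\mathcal{P}(F)}\e^{-\eta}G_\nu(\e)=0\}$ by Definition~\ref{def:BD}; it remains to compare $\widetilde G_\nu$ with $G_\nu$.

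The crux is a two-sided comparison of $(2\e)^d\widetilde\kappa_\e(t)=\E[{\rm e}^{-\|X(t)\|_1/\e}]$, with $\|\cdot\|_1$ the $\ell^1$-norm, and $\kappa_\e(t)=\Pp\{X(t)\in B(0\,,\e)\}$. On the event $\{X(t)\in B(0\,,\e)\}$ one has $\|X(t)\|_1<d\e$, which gives $(2\e)^d\widetilde\kappa_\e(t)\ge{\rm e}^{-d}\kappa_\e(t)$. In the opposite direction, $\|X(t)\|_1\ge|X(t)|$ and the layer-cake formula give
\[
	(2\e)^d\widetilde\kappa_\e(t)\le\E[{\rm e}^{-|X(t)|/\e}]=\int_0^{\infty}{\rm e}^{-u}\,\kappa_{\e u}(t)\,\d u .
\]
Integrating against $\nu\times\nu$ and using that $u\mapsto G_\nu(\e u)$ is nondecreasing and bounded by $1$, one obtains, for every $U>0$,
\[
	{\rm e}^{-d}\,G_\nu(\e)\ \le\ (2\e)^{d}\widetilde G_\nu(\e)\ \le\ G_\nu(\e U)+{\rm e}^{-U}.
\]

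The left inequality yields ``$\le$'' in \eqref{eq:th:HA} immediately: since $\e^{d-\eta}\widetilde G_\nu(\e)\ge 2^{-d}{\rm e}^{-d}\,\e^{-\eta}G_\nu(\e)$, the vanishing of $\varliminf_{\e\downarrow0}\inf_{\nu}\e^{d-\eta}\widetilde G_\nu(\e)$ forces that of $\varliminf_{\e\downarrow0}\inf_{\nu}\e^{-\eta}G_\nu(\e)$, so the right-hand side of \eqref{eq:th:HA} is $\le\BDim_\kappa F$. For ``$\ge$'', observe that both sets of admissible $\eta$ above are downward closed, so it suffices to show that every $\eta<\BDim_\kappa F$ lies in the $\widetilde G$-set. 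Given such an $\eta$, fix $\eta'\in(\eta\,,\BDim_\kappa F)$. By \eqref{Eq:BD0} there is a sequence $\delta_n\downarrow0$ with $Z_\kappa(\delta_n)\le\delta_n^{\eta'}$, hence measures $\nu_n\in\mathcal{P}(F)$ with $G_{\nu_n}(\delta_n)\le2\delta_n^{\eta'}$. Apply the right inequality above with $U=U_n:=(2\eta+1)\log(1/\delta_n)$ and $\e=\e_n:=\delta_n/U_n\downarrow0$, so that $\e_nU_n=\delta_n$ and ${\rm e}^{-U_n}=\delta_n^{2\eta+1}$:
\[
	\e_n^{d-\eta}\widetilde G_{\nu_n}(\e_n)\ \le\ 2^{-d}\Big(\frac{U_n}{\delta_n}\Big)^{\!\eta}\big(G_{\nu_n}(\delta_n)+{\rm e}^{-U_n}\big)\ \le\ 2^{-d}U_n^{\eta}\big(2\delta_n^{\eta'-\eta}+\delta_n^{\eta+1}\big),
\]
which tends to $0$ as $n\to\infty$, because $U_n$ grows only logarithmically while $\eta'-\eta>0$ and $\eta+1>0$. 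Thus $\eta$ belongs to the $\widetilde G$-set; letting $\eta\uparrow\BDim_\kappa F$ finishes the proof.

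The point that dictates this structure is that the family $\kappa$ need not satisfy any doubling bound of the form $\kappa_{2\e}(t)\le C\kappa_\e(t)$ --- the law of $X(t)$ may miss $B(0\,,\e)$ entirely while charging $B(0\,,2\e)$ --- so the upper bound for $\widetilde\kappa_\e$, which genuinely involves $\kappa_r$ over a range of scales $r$, cannot be reduced to a pointwise comparison with $\kappa_\e$. The way around this is to evaluate the smoothed energy $\widetilde G_{\nu_n}$ at a scale $\e_n$ slightly below a scale $\delta_n$ at which $G$ is already small, with the ratio $U_n=\delta_n/\e_n$ diverging just fast enough (logarithmically) to annihilate the exponential tail ${\rm e}^{-U_n}$ without spoiling the polynomial gain $\delta_n^{\eta'}$. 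The remaining ingredients --- the Plancherel/Tonelli bookkeeping behind the first identity, the downward closedness of the two dimension sets, and the existence of near-minimisers $\nu_n$ --- are routine.
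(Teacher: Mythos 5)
Your proof is correct, and it reaches the identity by a noticeably different (and somewhat cleaner) route than the paper. The paper proves the inequality ``$\BDim_\kappa F\ge{}$RHS'' by dominating $\1_{B(0,\e)}$ with the P\'olya kernel $\hat P_\e$ and then bounding $P_\e$ by the Cauchy density, and proves the converse by introducing an independent Cauchy process $C$ and using $\E\exp[ix\cdot C(k/\e)]-{\rm e}^{-k}\le\1_{B(0,\e)}(x)$ with the polynomial choice $k=\e^{-\delta}$, which costs a factor $1/(1+\delta)$ that must be removed by a final limiting argument. You instead observe that the Cauchy-product weight in \eqref{eq:th:HA} is exactly $\widehat p_\e(z/\e)$ for the Laplace kernel $p_\e$, so that Plancherel gives an \emph{exact} identity between the frequency-side integral and the smoothed energy $\widetilde G_\nu(\e)$; the two inequalities then become elementary spatial-side comparisons ($\1_{B(0,\e)}\le{\rm e}^d(2\e)^dp_\e$ pointwise, and a layer-cake bound $(2\e)^d\widetilde G_\nu(\e)\le G_\nu(\e U)+{\rm e}^{-U}$). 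Your logarithmic choice $U_n=(2\eta+1)\log(1/\delta_n)$ kills the exponential tail while the scale shift $\e_n=\delta_n/U_n$ costs only a power of $\log(1/\delta_n)$, so no $\delta\to0$ bookkeeping is needed at the end. The underlying harmonic-analytic idea is the same --- indeed the paper's Cauchy-process computation is precisely the Fourier inversion of the Laplace kernel in probabilistic disguise --- but your organization around one exact Plancherel identity avoids both the P\'olya kernel and the auxiliary process. Two small points you should make explicit in a final write-up: the Fourier inversion step needs $\widehat p_\e\in L^1$ (true) and the evenness of $p_\e$ to reduce $\E[p_\e(X(t)-X(s))]$ to $\widetilde\kappa_\e(|t-s|)$ for $t<s$; and when selecting the near-minimisers $\nu_n$ with $G_{\nu_n}(\delta_n)\le 2\delta_n^{\eta'}$ it is safer to take $G_{\nu_n}(\delta_n)\le Z_\kappa(\delta_n)+\delta_n^{\eta'}$ so as not to worry about whether $Z_\kappa(\delta_n)$ vanishes. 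Neither is a gap.
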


\begin{proof}
    We apply a variation of  the \emph{Cauchy semigroup argument} of
    \cite[proof of Theorem 1.1]{KX:07b}.
    Define for all $\e>0$, the (scaled) P\'olya distribution,
    \begin{equation}
        P_\e(x) := \prod_{j=1}^d \left( \frac{1-\cos(2\e x_j)}{
        2\pi\e x_j^2}\right)\qquad\text{for all $x\in\R^d.$}
    \end{equation}
    Then it is well-known, as well as elementary, that
    \begin{equation}
        \hat{P}_\e (\xi) = \prod_{j=1}^d\left(
        1 - \frac{|\xi_j|}{2\e} \right)^+\qquad\text{for all $\xi\in\R$},
    \end{equation}
    where $a^+:=\max(a\,,0)$ for all real numbers $a$, and
    $\hat{f}$ denotes the Fourier transform of $f$ normalized so that
    $\hat{f} (z) =\int_{\R^d} f(x)\exp(ix\cdot z)\, \d x$
    for all integrable functions $f:\R^d\to\R$.
    If $z\in B(0\,,\e)$, then $1-(2\e)^{-1} |z_j|\ge \frac12$.
    Consequently,
    $\1_{B(0,\e)}(z)\le 2^d \hat{P}_\e (z)$ for all
    $z\in\R^d$. Set $z:=X(t)$ and take expectations in the preceding
    inequality to find that for all $\e>0$ and $t\ge 0$,
    \begin{equation}
        \kappa_\e(t)
        \le 2^d \, \E \left[ \hat{P}_\e (X(t)) \right]
        = 2^d \int_{\R^d} \hat P_\e(z)\, \Pp(X(t)\in dz).
    \end{equation}
    We can apply Plancherel's identity
    to the right-hand side of
     this inequality and deduce that
    \begin{equation}
        \kappa_\e(t) \le 2^d\int_{\R^d} P_\e(y) {\rm e}^{-t\Psi(y)}
        \,\d y\qquad\text{for all $\e>0,\, t\ge 0.$}
    \end{equation}
    On the other hand, if $t<0$ then we use the L\'evy process
    $-X(-t)$ in place of $X(t)$ to deduce that
    $\kappa_\e(-t) \le 2^d \, \E[ \hat{P}_\e (-X(-t))]$,
    whence it follows that
    \begin{equation}
        \kappa_\e(-t) \le 2^d\int_{\R^d} P_\e(y) {\rm e}^{t\Psi(-y)}
        \,\d y\qquad\text{for all $\e>0$ and  $t< 0.$}
    \end{equation}
    Consequently, the following holds for all $\e>0$ and $t\in\R$:
    \begin{equation}\label{Eq:ka}
        \kappa_\e(|t|) \le 2^d\int_{\R^d} P_\e(y)
        \exp\left( -|t|\Psi\left( \textrm{sgn}(t)y\right) \right)
        \,\d y.
    \end{equation}

    Define $f_C$ to be the standard Cauchy density on $\R^d$;
    that is,
    \begin{equation}
        f_C(z):= \pi^{-d} \prod_{j=1}^d\left(1+z_j^2\right)^{-1}
        \qquad\text{for every $z:=(z_1\,,\ldots,z_d)\in\R^d$}.
    \end{equation}
	Because of the elementary inequality
	\begin{equation}
		\frac{1-\cos(2u)}{2 \pi\, u^2}
		= \frac{\sin^2 u}{\pi\, u^2}
		\leq \frac 1{1+u^2},
    \end{equation}
    valid for all nonzero $u$,
    it follows that $P_\e(y) \le (\pi\e)^d f_C(y)$ for all $y\in\R^d$.
    Thus, (\ref{Eq:ka}) and a change of variables imply that
    \begin{equation}\label{bd1}
        \int\int \kappa_\e(|t-s|)\,\nu(\d t)\,\nu(\d s)
        \le (2\pi)^d \int_{\R^d} f_C(z)\, \mathcal{E}_\nu
        \left(\frac{z}{\e}\right)\, \d z.
    \end{equation}
	This shows that every $\eta$   that  is smaller than the
	right-hand side of \eqref{eq:th:HA} also satisfies
	$\eta < \BDim_\kappa F$. Consequently
	$\BDim_\kappa F$ is larger or equal than the
	supremum that appears in \eqref{eq:th:HA}.
	
    Let us now establish the converse estimate.
    After enlarging the underlying probability space if
    necessary, we can introduce a Cauchy process
    $C:=\{C(t)\}_{t\ge 0}$---independent of
    $X$---whose coordinate processes $C_1,\ldots,C_d$ are i.i.d.\
    standard symmetric Cauchy processes on the line.
     For every  $\e>0$, $k\geq 1$, and $x\in\R^d$,
    \begin{equation}\label{eq:new}\begin{split}
        \E\exp\left[ix\cdot C( k/\e)\right] - {\rm e}^{-k}
        &= {\rm e}^{-k |x|_1/\e} - {\rm e}^{-k}\\
        &\le {\rm e}^{-k}\left({\rm e}^{-k\left(|x|-\e\right)/\e} - 1\right)\\
        &\leq \1_{B(0,\e)}(x).
    \end{split}\end{equation}
    In the above, $|x|_1 = \sum_{j=1}^d |x_j|$ is the $L^1$ norm of $x\in \R^d$.

   If $t\ge 0$, then we set $x:= X(t)$ in \eqref{eq:new} and take expectations
   to find that
    \begin{equation}\begin{split}
        \kappa_\e(t) &\ge \E\left[ \exp\left( i X(t) \cdot C(k/\e) \right) \right]
            -{\rm e}^{-k}\\
        &= \E\exp\left( - t \Psi(C(k/\e)) \right) - {\rm e}^{-k}\\
        &= \int_{\R^d} f_C(z) {\rm e}^{-t\Psi(kz/\e)}\, \d z - {\rm e}^{-k}.
    \end{split}\end{equation}
    If $t<0$, a similar calculation with $x:=-X(-t)$ in place of $X(t)$ yields
    \begin{equation}\begin{split}
        \kappa_\e(-t) &\ge \E\left[ \exp\left( -i X(-t) \cdot C(k/\e) \right) \right]
            -{\rm e}^{-k}\\
        &= \int_{\R^d} f_C(z) {\rm e}^{t\Psi(-kz/\e)}\, \d z - {\rm e}^{-k}.
    \end{split}\end{equation}
    Thus, for all $t\in\R$,
    \begin{equation}
        \kappa_\e(|t|)
        \ge \int_{\R^d} f_C(z) \exp\left( -|t|
        \Psi \left( \textrm{sgn}(t) kz/\e \right) \right)\, \d z - {\rm e}^{-k}.
    \end{equation}
    We choose $k:=\e^{-\delta}$, where $\delta$ is positive but arbitrarily
    small, replace $|t|$ by $|t-s|$, and integrate with respect to
    $\nu(\d t)\,\nu(\d s)$ to find that
    \begin{equation}
        \int\int \kappa_\e(|t-s|)\,\nu(\d s)\,\nu(\d t)\\
        \ge \int_{\R^d} f_C(z) \,\mathcal{E}_\nu\left(
        	\frac{z}{\e^{1+\delta}}\right) \, \d z -
		\exp\left( -\left[\frac 1{\e^{1+\delta}}\right]^{1+1/\delta}
		\right).
    \end{equation}
     If $\eta<\BDim_\kappa F$, then
	\begin{equation}
		\varliminf_{\e\downarrow 0} \inf_{\nu\in\mathcal P(F)}
		\int\int \frac{\kappa_\e(|t-s|)}{\e^\eta}\,\nu(\d s)\,\nu(\d t) = 0,
	\end{equation}
    and hence the preceding discussion implies that
    \begin{equation}
    	\varliminf_{h\downarrow 0} \inf_{\nu\in\mathcal P(F)}
	h^{-\eta/(1+\delta)} \int_{\R^d} f_C(z) \,\mathcal{E}_\nu
	(z/h) \, \d z = 0.
	\end{equation}
	That is, $\eta/(1+\delta)$ is smaller than the supremum that appears
	on the right-hand side of \eqref{eq:th:HA}. This implies that the
	right-hand side of \eqref{eq:th:HA} is less or equal than
	$\BDim_\kappa F / (1+\delta)$, whence the theorem,
	because $\delta$ is arbitrary.
\end{proof}

\subsection{The main result and proofs of corollaries}

\begin{theorem}\label{th:dimM}
	Let $X := \{X(t)\}_{t\ge 0}$ denote a L\'evy process in $\R^d$ and let
	$\kappa$ be defined by \eqref{eq:kappa} and
	\eqref{eq:kappa2}.  Then, for all nonrandom bounded Borel sets
	$F\subseteq\R_+$:
	\begin{align}\label{eq:dimM}
		\dimM X(F) &= \BDim_\kappa F \quad a.s.;\quad\text{and}\\
		\dimp X(F) &= \Dim_\kappa F\quad\text{a.s.}
		\label{eq:dimp}
	\end{align}
\end{theorem}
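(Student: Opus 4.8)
The plan is to prove the Minkowski identity \eqref{eq:dimM} first and then to deduce the packing identity \eqref{eq:dimp} from it. For a bounded set $A\subseteq\R^d$ write $N_\e(A)$ for the number of half‑open cubes of the $\e$‑mesh of $\R^d$ that meet $A$, so that $\dimM A=\varlimsup_{\e\downarrow0}\log N_\e(A)/\log(1/\e)$; by monotonicity of $N_\e$ in $\e$ it is enough, for the upper estimate in \eqref{eq:dimM}, to work along $\e=2^{-n}$. I would also reduce \eqref{eq:dimM} to compact $F$: since $\dimM X(F)=\dimM\overline{X(F)}$, and since right‑continuity of $t\mapsto X(t)$, the a.s.\ continuity of $X$ at each fixed time, and the countability of the set of points of $\bar F$ not approached from the right by $F$ together give $\overline{X(F)}=\overline{X(\bar F)}$ a.s., while weak density of $\mathcal P(F)$ in $\mathcal P(\bar F)$ and continuity of $\kappa_\e$ give $\BDim_\kappa F=\BDim_\kappa\bar F$; this makes Theorem~\ref{th:HA} applicable.

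\emph{Lower bound $\dimM X(F)\ge\BDim_\kappa F$.} The key is a purely deterministic inequality, valid for every sample path, every Borel $G\subseteq F$, every $\nu\in\mathcal P(G)$ and every $\e>0$: applying the Cauchy--Schwarz inequality to the image measure $L:=\nu\circ X^{-1}$, summed over the disjoint mesh cubes meeting $X(G)$ (each of which lies in the $\ell^\infty$-ball of radius $\e$ about any of its points), yields
\[
    N_\e\bigl(X(G)\bigr)\ \ge\ c_d\Bigl(\,\int\int\1\{|X(t)-X(s)|<\e\}\,\nu(\d s)\,\nu(\d t)\Bigr)^{-1}.
\]
Given $\eta<\BDim_\kappa F$, Definition~\ref{def:BD} supplies $\e_k\downarrow0$ and $\nu_k\in\mathcal P(F)$ with $\int\int\kappa_{\e_k}(|t-s|)\,\nu_k(\d s)\,\nu_k(\d t)=o(\e_k^{\eta})$, and this left‑hand side is exactly the expectation of the bracket above with $\nu=\nu_k$, $\e=\e_k$. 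Markov's inequality, passage to a subsequence along which the resulting probabilities are summable, and the Borel--Cantelli lemma then give $N_{\e_k}(X(F))\ge c_d\e_k^{-\eta}$ for all large $k$, a.s., so $\dimM X(F)\ge\eta$ a.s.; letting $\eta\uparrow\BDim_\kappa F$ through a sequence finishes this direction.

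\emph{Upper bound $\dimM X(F)\le\BDim_\kappa F$ --- the main obstacle.} Here one needs the dual, first‑moment estimate: $\E[N_\e(X(F))]$ is bounded, up to a constant and a scale‑independent factor, by the reciprocal of the (smoothed) minimal $\kappa$‑energy, so that by \eqref{Eq:BD0} (equivalently the form \eqref{eq:th:HA}) one obtains $\E[N_{2^{-n}}(X(F))]\le C\,2^{n\eta}$ for every $\eta>\BDim_\kappa F$ and all large $n$. One proves this by writing $\E[N_\e(X(F))]\le\sum_Q\Pp\{X(F)\cap Q\neq\varnothing\}$ over mesh cubes $Q$ and estimating each hitting probability by comparing an expected $\nu$‑occupation of a dilate of $Q$ with its worst‑case conditional value. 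The point at which the general, possibly pure‑jump, nature of $X$ forces genuinely new input is precisely that these occupation quantities cannot be controlled through path oscillations (as they could be for continuous $X$); instead one must route the estimate through the Cauchy‑semigroup/Plancherel smoothing used in the proof of Theorem~\ref{th:HA}, i.e.\ replace the indicator $\1_{B(0,\e)}$ by the integrals of that proof. Granting the first‑moment bound, Markov plus Borel--Cantelli along $\e=2^{-n}$ give $\dimM X(F)\le\eta$ a.s.; let $\eta\downarrow\BDim_\kappa F$. I expect this step to be the principal difficulty.

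\emph{From \eqref{eq:dimM} to \eqref{eq:dimp}.} For ``$\le$'': for any countable cover $F=\bigcup_nF_n$ by bounded Borel sets, $X(F)=\bigcup_nX(F_n)$, so $\sigma$‑stability of $\dimp$, the trivial bound $\dimp\le\dimM$ on bounded sets, and \eqref{eq:dimM} give $\dimp X(F)\le\sup_n\dimM X(F_n)=\sup_n\BDim_\kappa F_n$ on an a.s.\ event; choosing a countable family of covers along which $\sup_n\BDim_\kappa F_n\downarrow\Dim_\kappa F$ (Definition~\ref{def:Dimka}) and intersecting the corresponding a.s.\ events gives $\dimp X(F)\le\Dim_\kappa F$ a.s. For ``$\ge$'' one must avoid the $\omega$‑dependent covers implicit in $\dimp$, and I would argue through densities. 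Fix $s<\Dim_\kappa F=\hbox{\rm P-}\dim_\kappa F$. A Frostman‑type lemma for the packing measure $\mathcal P^{\,\cdot\,,\kappa}$ of Definitions~\ref{def:Pmeaska}--\ref{def:Pmeaska2}---the kernel analogue of the classical lower‑density criterion for packing measures, obtained by adapting Howroyd's arguments---produces a compact $K\subseteq F$ and $\nu\in\mathcal P(K)$ with $\varliminf_{r\downarrow0}r^{-s}\int\kappa_r(|u-t|)\,\nu(\d u)<\infty$ for $\nu$‑a.e.\ $t$. Put $\mu:=\nu\circ X^{-1}$, a probability measure carried by $\overline{X(K)}\subseteq\overline{X(F)}$. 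For each fixed such $t$, Fatou's lemma gives $\E\bigl[\varliminf_{r\downarrow0}r^{-s}\nu\{u:|X(u)-X(t)|<r\}\bigr]\le\varliminf_{r\downarrow0}r^{-s}\int\kappa_r(|u-t|)\nu(\d u)<\infty$, hence $\varliminf_{r\downarrow0}r^{-s}\mu(B(X(t),r))<\infty$ a.s.; by Tonelli's theorem this upgrades to: a.s., $\varliminf_{r\downarrow0}r^{-s}\mu(B(x,r))<\infty$ for $\mu$‑a.e.\ $x$. The classical lower‑density criterion for packing measure in $\R^d$ then yields $\mathcal P^{s}(X(F))>0$, so $\dimp X(F)\ge s$ a.s.; letting $s\uparrow\Dim_\kappa F$ completes the proof.
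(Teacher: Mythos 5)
The decisive step of the whole theorem is the one you leave unproved: the first--moment bound $\E[N_\e(X(F))]\le C\,\bigl(\inf_{\nu\in\mathcal P(F)}\int\int\kappa_\e(|t-s|)\,\nu(\d s)\,\nu(\d t)\bigr)^{-1}$, which gives $\dimM X(F)\le\BDim_\kappa F$. You correctly identify it as the principal difficulty, but the tool you point to --- the Cauchy--semigroup/Plancherel smoothing of Theorem~\ref{th:HA} --- is not what closes it; that device only re-expresses the energy $\int\int\kappa_\e\,\d\nu\,\d\nu$ in Fourier terms and says nothing about hitting probabilities of individual mesh cubes. The argument that actually works is different in kind: one starts the process from Lebesgue measure, i.e.\ works under the $\sigma$-finite measure $\Pp_{\lambda_d}(\cdot)=\int_{\R^d}\Pp^x(\cdot)\,\d x$, so that by translation invariance the sum $\sum_Q\Pp\{X(F)\cap Q\neq\varnothing\}$ collapses (up to the Kolmogorov-capacity inequality $\e^d K_G(\e)\le\lambda_d(G^\e)$) into the single quantity $\e^{-d}\,\Pp_{\lambda_d}\{\inf_{t\in F}|X(t)|\le\e\}=\e^{-d}\,\E[\lambda_d((X(F))^\e)]$. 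One then bounds this hitting ``probability'' by a first/second-moment argument: introduce the occupation functional $\ell_{2\e,\mu}=\int(4\e)^{-d}\1_{B(0,2\e)}(X(s))\,\mu(\d s)$, condition on $\mathcal F_T$ for the stopping time $T=\inf\{t\in F:|X(t)|\le\e\}$ using the strong Markov property under $\Pp_{\lambda_d}$ (this requires a separate lemma), and --- crucially --- choose $\mu$ to be the conditional law of $T$ given $\{T<\infty\}$, which is what makes the \emph{infimum over all} $\nu\in\mathcal P(F)$ appear in the denominator. Your sketch (``comparing an expected $\nu$-occupation of a dilate of $Q$ with its worst-case conditional value'') gestures at this, but without the Lebesgue initial measure, the Markov-property lemma, and the self-improving choice $\mu=\mu_\infty$, the estimate is not established; as written the proof is incomplete at its core.

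The remaining parts are essentially in order. Your lower bound for \eqref{eq:dimM} (Cauchy--Schwarz on the image measure, Markov, Borel--Cantelli along a subsequence) is correct and is only a repackaging of the paper's route through the potential-theoretic characterization of $\dimM$; your deduction of the upper half of \eqref{eq:dimp} is the same as the paper's. For the lower bound $\dimp X(F)\ge\Dim_\kappa F$ you take a genuinely different path: a Frostman-type lemma for the kernel packing measure $\mathcal P^{s,\kappa}$, push-forward, Fatou/Tonelli, and the classical lower-density criterion for packing measure in $\R^d$. This is the Falconer--Howroyd--Xiao density route and the probabilistic part of it is sound (the Fatou and Tonelli steps are correct, and the density theorem applies to the sets $\{x:\varliminf_r r^{-s}\mu(B(x,r))\le c\}$ of positive $\mu$-measure). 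It does, however, rest on two pieces of unproven machinery --- the identity $\Dim_\kappa=\hbox{\rm P-}\dim_\kappa$ and the kernel Frostman lemma, both requiring a full adaptation of Howroyd's arguments --- whereas the alternative is to extract a compact $E\subseteq F$ with $\BDim_\kappa(E\cap(s,t))\ge\gamma$ on all rational intervals meeting $E$ and then apply the already-proved \eqref{eq:dimM} simultaneously to all these pieces together with a Baire-category argument; that route reuses \eqref{eq:dimM} instead of a new density theorem. Either way, fix the first-moment bound first: without it neither half of the theorem stands.
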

Theorem \ref{th:dimM} is proved in the Section \ref{sec3} below.
In the remaining part of this section we apply Theorem \ref{th:dimM}
in order to verify the three corollaries---Corollary \ref{co:stable},
\ref{co:HA:subordinator} and \ref{co:Dim:Sub}---that were mentioned
earlier in the introduction.

\begin{proof}[of Corollary \ref{co:stable}]
	It is well known that for all $T>0$ there exist constants
	$0 < A_1 \leq A_2 < \infty$ such that
	uniformly for all $t\in[0\,,T]$ and $\e\in(0\,,1)$,
	\begin{equation}
		A_1 \left( \frac{\e}{t^{1/\alpha}} \wedge 1\right)^d \le
		\kappa_\e(t) \le A_2\left( \frac{\e}{t^{1/\alpha}} \wedge 1\right)^d.
	\end{equation}
	It follows from the very definition of
	$\BDim_\kappa$ that $\BDim_\kappa F$ is equal to the
	supremum of all $\eta>0$ such that
	\begin{equation}
		\varliminf_{\e\downarrow 0} \frac{1}{\e^{\eta/\alpha}}
		\inf_{\nu\in\mathcal{P}(F)} \int\int \left[ \left(
		\frac{\e}{|t-s|}\right)^{d/\alpha} \wedge 1\right]\,\nu(\d s)\,\nu(\d t)=0.
	\end{equation}
	That is, in this case,
	\begin{equation}
		\BDim_\kappa F = \alpha\, \bDim_{d/\alpha} F,
	\end{equation}
	where $\bDim_s F$ denotes the $s$-dimensional
	box-dimension profile of Howroyd \cite{Howroyd}.
	Consequently, we can combine \eqref{eq:dimp} together
	with our earlier result \cite[Theorem 4.1]{KX:07a} to deduce that
	$\Dim_\kappa F= \alpha\, \Dim^{\textrm{FH}}_{d/\alpha} F$,
	where $\Dim^{\textrm{FH}}_s F$ denotes the $s$-dimensional
	packing dimension profile of Falconer and Howroyd \cite{FalconerHowroyd}.
	This concludes the proof.
\end{proof}

\begin{proof}[of Corollary \ref{co:HA:subordinator}]
	Recall that $\Phi$ is the Laplace exponent of the subordinator
	$S$ and write $\Psi$ for its characteristic L\'evy exponent;\
	i.e., $\E \,{\rm e}^{i\xi S(t)} =
	{\rm e}^{-t\Psi(\xi)}$.  We may introduce an independent
	real-valued symmetric Cauchy process $X$ and denote respectively
	by $\E_S$ and $\E_X$ the expectations corresponding to $S$ and $X$.
	In this way we find that for all $\lambda \ge 0$,
	\begin{equation}\begin{split}
		{\rm e}^{-t\Phi(\lambda)}
			= \E_S\, {\rm e}^{-\lambda S(t)}
			&= \E_S\E_X\, {\rm e}^{i S(t) X(\lambda)}
       		 	= \E_X \E_S\, {\rm e}^{i X(\lambda) S(t)}\\
        	&= \E_X\, {\rm e}^{-t\Psi(X(\lambda))}
       			= \frac{1}{\pi}\, \int_{-\infty}^\infty
        		\frac{{\rm e}^{-t\Psi(\lambda z)}}{1+z^2}\, \d z.
    \end{split}\end{equation}
    This and a symmetry argument show that for all $s,t,\lambda\ge 0$,
    \begin{equation}
		{\rm e}^{-|t-s|\Phi(\lambda )} = \frac{1}{\pi}\,
		\int_{-\infty}^\infty
		\frac{{\rm e}^{-|t-s|\Psi(\textrm{sgn}(t-s)\lambda z)}}{1+z^2}\, \d z.
    \end{equation}
    Let $\lambda:=1/\e$, and
    integrate both sides with respect to $\nu(\d s)\,\nu(\d t)$,
    for some $\nu\in\mathcal{P}_c(F)$, to find that
    \begin{equation}
	\frac1\pi
        \int_{-\infty}^\infty \frac{\mathcal{E}_\nu(z/\e)}{1+z^2}\, \d z
        = \int\int {\rm e}^{-|t-s|\Phi(1/\e)}\,\nu(\d s)\,\nu(\d t).
    \end{equation}
    Thus, we obtain the corollary immediately from Theorem \ref{th:HA}
    and Theorem \ref{th:dimM}.
\end{proof}

\begin{proof}[of Corollary \ref{co:Dim:Sub}]
    We can write $s=1/\alpha$, where $\alpha\in(0\,,2]$.
    By enlarging the underlying probability space, if need be,
    we introduce an independent, linear, symmetric stable L\'evy process
    $X_\alpha$ with index $\alpha$. The subordinate process
    $X_\alpha\circ S$ is itself a L\'evy process, and its characteristic
    exponent is $z\mapsto \Phi (|z|^\alpha )$ for $z\in\R$.
    According to Theorem 1.1 of \cite{KX:07b},  the following
    holds a.s.:
    \begin{equation}\label{eq:main:int}
        \dimp X_\alpha(S([0\,,1])) = \varlimsup_{r\downarrow 0}\frac{1}{\log r}
        \log \left(\int_0^\infty \frac{\d x}{(1+x^2)(1+\Phi((x/r)^\alpha))}
        \right).
    \end{equation}
    We analyze the integral by splitting it over three regions.
    Without loss of generality we may assume that $0<r<\frac 12$.

    If $x\in(0\,,r)$, then $0 \le \Phi((x/r)^\alpha)\le\Phi(1)$,
    and hence
    \begin{equation}\label{case1}
        \int_0^r \frac{\d x}{(1+x^2)(1+\Phi((x/r)^\alpha))}
        \asymp r\qquad\text{as $r\downarrow 0$},
    \end{equation}
    where ``$f(r)\asymp g(r)$ as $r\downarrow 0$''
    means that $f(r)/g(r)$
    is bounded above and below by constants that do not depend on
    $r$ as $r\downarrow 0$. 

    Similarly,
    \begin{equation}
        \int_r^1 \frac{\d x}{(1+x^2)(1+\Phi((x/r)^\alpha))}
        \asymp r\int_1^{1/r} \frac{\d x}{\Phi(x^\alpha)} := f(r)
        \quad\text{as $r\downarrow 0$},
    \end{equation}
    and
    \begin{equation}
        \int_1^\infty \frac{\d x}{(1+x^2)(1+\Phi((x/r)^\alpha))}
        \asymp \frac 1r\int_{1/r}^\infty \frac{\d x}{x^2\Phi(x^\alpha)}
        :=g(r)
        \quad\text{as $r\downarrow 0$}.
    \end{equation}

    We first observe that $\int_1^{1/r} 1/\Phi(x^\alpha)\,\d x$ is bounded
    away from zero for all $r\in(0\,,1/2)$. This proves that
    $r=O(f(r))$ as $r\downarrow 0$, and hence the integral in
    \eqref{case1} does not contribute to the limit in
    \eqref{eq:main:int}. In addition,
    $f(r) \ge 1/\Phi(r^{-\alpha})$,
    and hence
    \begin{equation}
        g(r) \le \frac{1}{r\Phi(r^{-\alpha})}\int_{1/r}^\infty
        \frac{\d x}{x^2} \le f(r).
    \end{equation}
    Because $\alpha=1/s$,
    the preceding observations together prove that
    \begin{equation}\label{eq:dis1}
        \dimp X_\alpha(S([0\,,1])) = \varlimsup_{r\downarrow 0}
        \frac{\log f(r)}{\log r} = 1-\theta.
        \qquad\text{a.s.}
    \end{equation}
    On the other hand, we can apply \eqref{eq:dimp},
    conditionally on the process $S$, in order to deduce that
    \begin{equation}\label{eq:dis2}
        \dimp X_\alpha(S([0\,,1])) =
        \alpha \Dim^{\textrm{FH}}_{1/\alpha} S([0\,,1])
        \qquad\text{a.s.;}
    \end{equation}
    see also Corollary \ref{co:stable}.
    Corollary \ref{co:Dim:Sub}
    follows upon combining \eqref{eq:dis1} and \eqref{eq:dis2}.
\end{proof}

\section{Proof of Theorem \ref{th:dimM}}\label{sec3}
Here and throughout, we define a measure $\Pp_{\lambda_d}$
by
\begin{equation}
    \Pp_{\lambda_d}(\cdot) := \int_{\R^d} \Pp^x (\cdot)\, \d x.
\end{equation}
It is easy to see that $\Pp_{\lambda_d}$ is a $\sigma$-finite
measure on the underlying measurable space
$(\Omega\,,\mathcal{F})$.
The corresponding expectation operator will be denoted by
$\E_{\lambda_d}$; that is,
$\E_{\lambda_d}(Z):=\int_{\R^d}\E^x(Z)\,\d x$
for every nonnegative measurable random variable $Z$.

Let $\{\mathcal{F}_t\}_{t\ge 0}$ denote the filtration generated
by $X$, augmented in the usual way.
In order to prove \eqref{eq:dimM}, we
make use of the following strong Markov property
for $\Pp_{\lambda_d}$.

\begin{lemma}\label{Lem:SMP}
    If $f: \R^d \to \R$ is a bounded measurable function and
    $T$ is a stopping time such that $\Pp\{T < \infty\} = 1$, then
    \begin{equation} \label{Eq:MP}
        \E_{\lambda_d} \left[\left.
        f(X(t))\, \right|\, \mathcal{F}_{T}\right]=
        \E^{X(T)} \left[f(X(t-T))\right],
	\end{equation}
	$\Pp_{\lambda_d}$-a.s.\ on $\{T<t\}$.
\end{lemma}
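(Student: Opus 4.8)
The plan is to deduce \eqref{Eq:MP} from the \emph{ordinary} strong Markov property, which holds under each of the probability measures $\Pp^x$ with respect to the given augmented filtration, by integrating over the initial position $x\in\R^d$ against Lebesgue measure. I would first dispose of the fact that $\Pp_{\lambda_d}$ is merely $\sigma$-finite: the events $\Omega_n:=\{|X(0)|\le n\}$ lie in $\mathcal F_0\subseteq\mathcal F_T$, they increase to $\Omega$, and $\Pp_{\lambda_d}(\,\cdot\cap\Omega_n)=\int_{[-n,n]^d}\Pp^x(\cdot)\,\d x$ is a \emph{finite} measure; since $\Omega_n\in\mathcal F_T$, conditioning given $\mathcal F_T$ localises along the $\Omega_n$, so it suffices to prove the lemma with $\Pp_{\lambda_d}$ replaced by $\Pp_{\lambda_d}(\,\cdot\cap\Omega_n)$ and then let $n\to\infty$; in particular we may pretend the ambient measure is finite. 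Next, writing $g(s\,,y):=\E^y[f(X(t-s))]$ for $s<t$, one checks (first for bounded continuous $f$, via stochastic continuity of the Lévy process, then for bounded Borel $f$ by a monotone-class argument) that $g$ is jointly Borel measurable; hence $\1_{\{T<t\}}\,g(T\,,X(T))=\1_{\{T<t\}}\,\E^{X(T)}[f(X(t-T))]$ is $\mathcal F_T$-measurable. Since also $\{T<t\}\in\mathcal F_T$, the assertion \eqref{Eq:MP} is equivalent to
\[
    \E_{\lambda_d}\!\left[\1_A\,\1_{\{T<t\}}\,f(X(t))\right]=\E_{\lambda_d}\!\left[\1_A\,\1_{\{T<t\}}\,g(T\,,X(T))\right]\qquad\text{for every }A\in\mathcal F_T .
\]

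I would then establish the corresponding identity under each fixed $\Pp^x$. On the event $\{T<t\}$ one has $X(t)=X(T+(t-T))$ with $t-T\in(0\,,t]$, so I introduce the bounded Borel functional $(s\,,\omega)\mapsto G(s\,,\omega):=\1_{\{s<t\}}\,f(\omega(t-s))$ and invoke the ``strong Markov property with a parameter'': approximating $G$ by finite sums $\sum_i\1_{B_i}(s)\,G_i(\omega)$ and using that $\{T\in B_i\}\in\mathcal F_T$ while each $\omega\mapsto G_i(X(T+\cdot))$ is a post-$T$ functional, the ordinary strong Markov property under $\Pp^x$ (valid on $\{T<\infty\}\supseteq\{T<t\}$, regardless of $\Pp^x\{T<\infty\}$) yields
\[
    \E^x\bigl[\1_A\,\1_{\{T<t\}}\,f(X(t))\bigm|\mathcal F_T\bigr]=\1_A\,\1_{\{T<t\}}\,g(T\,,X(T))\qquad\Pp^x\text{-a.s.}
\]
Taking expectations here and integrating over $x\in[-n,n]^d$ --- by Tonelli's theorem, applied to $f^+$ and $f^-$ separately over the bounded region so that everything is finite --- produces exactly the displayed reduction, with $\Pp_{\lambda_d}$ replaced by $\Pp_{\lambda_d}(\,\cdot\cap\Omega_n)$ and $A$ ranging over $\mathcal F_T$. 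Letting $n\to\infty$ completes the proof.

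The probabilistic content here --- the strong Markov property with a random parameter --- is entirely standard for Lévy processes, so the step I would be most careful about is purely measure-theoretic bookkeeping: checking that conditioning on $\mathcal F_T$ is legitimate under the $\sigma$-finite measure $\Pp_{\lambda_d}$ and that the test-set criterion applies (this is what the truncations $\Omega_n$ are for), and making sure that every interchange of an expectation with the $\d x$-integral is carried out over a nonnegative integrand or over a genuinely integrable one.
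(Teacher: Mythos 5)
Your argument is correct, but it takes a genuinely different route from the paper's. You reduce the lemma to the ordinary strong Markov property under each individual law $\Pp^x$ --- handling the parameter-dependence of the functional $\1_{\{s<t\}}f(\omega(t-s))$ by a product-function/monotone-class argument --- and then integrate over the starting point, using the $\mathcal F_0$-measurable truncations $\Omega_n=\{|X(0)|\le n\}$ to make conditioning under the $\sigma$-finite measure $\Pp_{\lambda_d}$ legitimate. The paper instead works directly with $\Pp_{\lambda_d}$: it first establishes \eqref{Eq:MP} for deterministic times (quoting Proposition 3.2 of \cite{KXZ}), extends it to discrete stopping times by an elementary computation, and then passes to a general $T$ by approximating with discrete stopping times $T_n\downarrow T$ and invoking the joint continuity of $(x,s)\mapsto\E^x[f(X(s))]$ together with dominated convergence. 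Your version is more modular: it isolates the only genuinely new issue (the $\sigma$-finiteness of the reference measure) and delegates the probabilistic content to a standard fact about L\'evy processes, at the price of invoking the parameter-dependent form of the strong Markov property; the paper's version is more self-contained but its limiting step, as written, applies to continuous $f$ and needs the same monotone-class extension to bounded Borel $f$ that you carry out when constructing $g$. One point worth making explicit in your write-up: the ``finite sums'' approximation of $G$ is best phrased as a functional monotone-class argument starting from product functions $\1_B(s)G_1(\omega)$, for which the identity is immediate because $\1_B(T)$ is $\mathcal F_T$-measurable; a literal pointwise approximation would additionally require right-continuity of the paths to pass to the limit.
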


\begin{proof}
    This is well known, particularly for Brownian motion;
    see for example Chung \cite[p.\ 58, Theorem 3]{Chung}.
    We include an elementary self-contained proof.

    If $T$ is nonrandom, say $T=s$ a.s., then (\ref{Eq:MP}) follows
    directly from Proposition 3.2 in \cite{KXZ}. It can be verified
    by elementary computations that
    \eqref{Eq:MP} holds also when $T$ is a discrete stopping time.

    In general, there exists a sequence of discrete stopping times
    $T_n$ such that $T_n \downarrow T$. For any event $A \in
    \mathcal{F}_{T}$, we have $A \in \mathcal{F}_{T_n}$. It follows
    that for all bounded and continuous functions $f$ and $g$ on
    $\R^d$ ($g \in L^1(\R^d)$),
    \begin{equation} \label{Eq:MP2}
        \E_{\lambda_d} \big[f(X(t)) g(X(T_n)) \1_{A \cap \{T_n < t\}} \big]\\
        = \E_{\lambda_d} \left[\E^{X(T_n)} \left(f(X(t-T_n))\right) g(X(T_n))
            \1_{A \cap \{T_n < t\}} \right].
    \end{equation}
    Since the function $\E^x[f(X(s))]$ is continuous in the variables $(x\,, s)$,
    the integrand in the last expression tends to $\E^{X(T)}[f(X(t-T))]
    g(X(T)) \1_{A \cap \{T < t\}}$ almost surely as $n \to \infty$. Hence we can
    apply the dominated convergence theorem to derive \eqref{Eq:MP}
    from \eqref{Eq:MP2}.
\end{proof}

\begin{proof}[of Theorem \ref{th:dimM}: \eqref{eq:dimM}]
    Given $\mu\in\mathcal{P}(F)$ and $\e>0$, let us define
    \begin{equation}
        \ell_{\e,\mu} := \int \frac{\1_{B(0,\e)}(X(s))}{(2\e)^d}\,\mu(\d s).
    \end{equation}
    Note that $\E_{\lambda_d}(\ell_{\e,\mu})=1$.

	Now, let $T:=T_F(\e):=\inf\{ t\in F:\, |X(t)|\le \e\}$. Then
	$T$ is a stopping time and by Lemma \ref{Lem:SMP},  for all $n\ge 1$,
	\begin{equation}\begin{split}
	\E_{\lambda_d} \left( \ell_{2\e,\mu}\, \big|\, \mathcal{F}_{T\wedge n}\right)
		&\ge \frac{1}{(4\e)^d}
		\int_{T\wedge n}^\infty \Pp_{\lambda_d} \left( |X(s)|\le 2\e\,
		\big|\, \mathcal{F}_{T\wedge n} \right) \, \mu(\d s)\\[\bigskipamount]
	&\ge \frac{1}{(4\e)^d}
		\int_T^\infty \kappa_\e(s-T) \, \mu(\d s)\cdot
		\1_{\{T<n\}}.
	\end{split}\end{equation}
	We have used the triangle inequality, together with the fact
	that $|X(T\wedge n)|\le\e$, $\Pp_{\lambda_d}$-a.s.\ on $\{T<n\}$.
	Since $\E_{\lambda_d}\big(\ell_{2\e,\mu}\big) = 1$, we find that
	\begin{equation}\begin{split}
		1 &= \E_{\lambda_d}\left[\E_{\lambda_d} \left(
			\left. \ell_{2\e,\mu}\, \right|\,
			\mathcal{F}_{T\wedge n}\right) \right]\\
		&\geq \frac{1}{(4\e)^d}\,
			\E_{\lambda_d} \left[\int_T^\infty \kappa_\e(s-T) \, \mu(\d s)\cdot
			\1_{\{T<n\}}\right].
    \end{split}\end{equation}
    Denote by $\mu_\infty(\d t):= \Pp_{\lambda_d}(T\in \d t \,|\, T< \infty)$.
    We can let $n\to\infty$ and deduce the following from the monotone
    convergence theorem:
    \begin{equation}\begin{split}
        1 &\geq \frac{1}{(4\e)^d}\, \E_{\lambda_d}
        	\left[\int_T^\infty \kappa_\e(s-T) \,
		\mu(\d s)\cdot \1_{\{T<\infty\}}\right].\\
        &= \frac{1}{(4\e)^d} \int_0^\infty \mu_\infty(\d t)
        	\int_t^\infty \mu(\d s)\, \kappa_\e(s-t) \cdot \Pp_{\lambda_d}\{T<\infty\}.
    \end{split}\end{equation}
    This holds for all probability measures $\mu$. We now
    choose $\mu:=\mu_\infty$ to find that for all $\e>0$,
    \begin{equation}\label{hit:UB}
        \Pp_{\lambda_d} \left\{ \inf_{t\in F} |X(t)|\le\e\right\}
        \le \frac{2^{d+1}(2\e)^d}{\inf_{\nu\in\mathcal{P}(F)}
        \int\int \kappa_\e(|t-s|)
        \,\nu(\d s)\,\nu(\d t)}.
    \end{equation}

    On the other hand, by the Fubini--Tonelli theorem,
    \begin{equation}\label{Eq:Kol}\begin{split}
        \Pp_{\lambda_d}\left\{ \inf_{t\in F}|X(t)|\le \e\right\}
        &= \int_{\R^d} \Pp \Big\{ \inf_{t\in F}|x+X(t)|\le \e\Big\}\, \d x\\
        &=\E \left\{ \lambda_d \left[\left( (X(F) \right)^\e\right] \right\},
    \end{split}\end{equation}
    where $G^\e :=\overline{G+B(0\,,\e)}$ denotes the closed $\e$-enlargement of
    $G$ in the $\ell^\infty$-metric of $\R^d$.

    Let $K_G$ denote the Kolmogorov capacity
    of the set $G$. That is, $K_G(r)$ is the maximal number
    $m$ of points $x_1,\ldots, x_m$ in $G$ with
    $\min_{j\neq k}|x_j-x_k|\geq r$.

    \noindent
    The following three observations will be important for our argument:
    \begin{itemize}
        \item[(i)] $r^d K_G(r)\le \lambda_d(G^r)$ for every $r>0$, where
            $G^r$ denotes the closed $r$-enlargement of $G$.
            Indeed, we can find $k:= K_G(r)$ points $x_1,\ldots,x_k\in G$
            such that $B(x_1\,,r/2),\ldots,B(x_k\,,r/2)$ are disjoint. Since
            $G^r$ contains $\cup_{j=1}^k B(x_j\,,r/2)$ as a subset, the claim
            follows from the monotonicity of the Lebesgue measure.
        \item[(ii)] For every bounded set $G \subseteq \R$, the [upper]
            Minkowski dimension of $G$ is defined by
            $\overline{\dim}_{_{\rm M}} G
            :=\varlimsup_{r\downarrow 0}\log K_G(r)/\log(1/r)$.
            And Tricot \cite{Tricot} (see also Falconer \cite{Fal90})
            has proved that the packing dimension can be defined
            by regularizing $\overline{\dim}_{_{\rm M}}$. Namely, for any set
            $F\subseteq \R$,
            \begin{equation}\label{Eq:Tricot}
               \dimp F  = \inf \sup_{n\ge 1}\, \overline{\dim}_{_{\rm M}} F_n,
            \end{equation}
            where the infimum is take over all bounded Borel sets $F_1,F_2,\ldots$
            such that $F\subseteq\cup_{n=1}^\infty F_n$.
      \item[(iii)]  For every analytic set $G \subseteq \R$,
        \begin{equation}\label{Eq:BD3}
            \overline{\dim}_{_{\rm M}} G\\
            = \sup \left\{\eta> 0:  \varliminf_{\e\downarrow 0}
                \frac{1}{\e^\eta} \inf_{\nu\in\mathcal{P} (G)}
                \int\int \1_{\{|t-s|\le \e\}}\, \nu(\d s)\,\nu(\d t)   =0 \right\}.
        \end{equation}
        This follows from \cite[Theorem 4.1]{KX:07a} which extended in turn an
        earlier result of Howroyd \cite{Howroyd}.
    \end{itemize}

    Consequently, we use \eqref{hit:UB} and \eqref{Eq:Kol} to obtain
    \begin{equation}\label{Eq:Mom}
        \begin{split}
            \E\left[ K_{X(F)}(\e) \right]&\le \frac{1}{\e^d}
                \E\left[\lambda_d \left(\left( X(F)\right)^\e \right) \right]\\
            &\le \frac{2^{2d+1}}{\inf_{\nu\in\mathcal{P}(F)} \int\int\kappa_\e(|s-t|)
                \,\nu(\d s)\,\nu(\d t)}.
        \end{split}
    \end{equation}
    Fix a number $s>\BDim_\kappa F$. By \eqref{Eq:BD},  there
    exists a finite constant $c>0$ such that
    \begin{equation}\label{Eq:BD2}
        \inf_{\nu\in\mathcal{P}(F)}
        \int\int \kappa_\e(|t-s|) \,\nu(\d s)\,\nu(\d t)\ge c \e^s,
    \end{equation}
    for all sufficiently small $\e>0$.
    It follows from (\ref{Eq:Mom}) and (\ref{Eq:BD2})
    that for all $q\in(0\,,1)$
    \begin{equation}
        \Pp \left\{ K_{X(F)}(\e) \ge \e^{-(q+s)} \right\}=O(\e^q)
        \qquad(\e\downarrow 0).
    \end{equation}
    We apply the preceding with $\e:=2^{-n}$,
     and use the Borel--Cantelli lemma
    together with a standard monotonicity, in order to obtain
    the following:
    \begin{equation}
        K_{X(F)}(\e)=O\left( \e^{-(q+s)}\right)\qquad\text{$(\epsilon\downarrow 0)$
        \qquad a.s.}
    \end{equation}
     This proves that $\dimM X(F)\le s+q$ a.s.  Now we first let
    $q\downarrow 0$ and then $s\downarrow \BDim_\kappa F$
    (along countable sequences) to deduce
     the almost sure inequality $\dimM X(F) \le \BDim_\kappa F$.

    Next we prove $\dimM X(F) \ge \BDim_\kappa F$ a.s.
	The definition  \eqref{Eq:BD}
	of $\BDim_\kappa$ implies that for all  $\eta<\BDim_\kappa F$ there exist
    a sequence of positive numbers $\e_n \downarrow 0$ and a sequence
    of measures $\nu_1,\nu_2,\ldots\in\mathcal{P}(F)$ such that
    \begin{equation}\label{Eq:seq}
        \varliminf_{n \to 0}\int\int  \frac{\kappa_{\e_n}(|t-s|)}{\e_n^\eta} \,
        \nu_n(\d s)\, \nu_n(\d t) =0.
    \end{equation}
    If $m_n:=\nu_n\circ X^{-1}$, then $m_n\in\mathcal{P}(X(F))$
    almost surely and
    \begin{equation}
        \E\left[
            \int\int \frac{\1_{\{ |x-y|\le \e_n\}}}{\e_n^\eta}\, m_n(\d y)\, m_n(\d x)
            \right]\\
        =\int\int  \frac{\kappa_{\e_n}(|t-s|)}{\e_n^\eta}
        	\, \nu_n(\d s)\, \nu_n(\d t).
    \end{equation}
    This, Fatou's lemma, and \eqref{Eq:seq} together imply that
    \begin{equation}
        \varliminf_{\e\downarrow 0}\inf_{m\in\mathcal{P}(X(F))}
        \int\int  \frac{\1_{\{ |x-y|\le \e\}}}{\e^\eta}  \, m(\d y)\, m(\d x)
        =0 \quad\text{a.s.}
    \end{equation}
    Consequently, it follows from \eqref{Eq:BD3} that
    $\dimM X(F)\ge\eta$ a.s. Let $\eta$ tend upward to
    $\BDim_\kappa F$ in order to conclude that
    $\dimM X(F) \ge \BDim_\kappa F$ a.s., whence \eqref{eq:dimM}.
\end{proof}

\begin{proof}[of Theorem \ref{th:dimM}: \eqref{eq:dimp}]
    First we prove the upper bound in \eqref{eq:dimp}.
    By the definition  \eqref{Def:Dimka}
    of $\Dim_\kappa$ for all $\gamma >
    \Dim_\kappa F$ there exists a sequence
     $\{F_n\}_{n\geq 1}$ %
    of bounded Borel sets such that
    \begin{equation}\label{Eq:Dim2}
        F \subseteq \bigcup_{n=1}^\infty F_n \quad \hbox{ and }
        \quad \sup_{n\ge 1} \BDim_\kappa F_n < \gamma.
    \end{equation}
    Since $X(F) \subseteq \cup_{n=1}^\infty X(F_n)$, \eqref{Eq:Tricot}
    and Theorem \ref{th:dimM} together imply that
    \begin{equation}\label{Eq:Dim3}
        \dimp X(F) \le \sup_{n\ge 1}\dimM X(F_n) =
        \sup_{n\ge 1} \BDim_\kappa F_n < \gamma \quad \text{ a.s.}
    \end{equation}
    Thus, $\dimp X(F) \le \Dim_\kappa F$ a.s.

    Next we complete the proof of \eqref{eq:dimp}
    by deriving the complementary lower bound,
    \begin{equation}\label{eq:LastGoal}
        \dimp X(F) \ge \Dim_\kappa F
        \qquad\text{a.s.}
    \end{equation}
    It suffices to consider
    only the case that $\Dim_\kappa F>0$; otherwise, there is nothing to prove.

    First we claim that \eqref{Def:Dimka} implies that for
    every $0 < \gamma < \Dim_\kappa F$ there exists a compact subset $E \subseteq F$ such
    that $ \BDim_\kappa (E\cap (s\,, t)) \ge \gamma$ for all $s, t \in
    \mathbf{Q}_+$ and $s < t$ that satisfy $E \cap (s\,, t)\ne \varnothing$. In
    order to verify this claim let us notice that
    if, in addition, $F$ were closed then we could apply the $\sigma$-stability
    of $\Dim_\kappa$ in order to construct a compact
    set $E \subseteq F$ with the desired property as in the proof of
    Lemma 4.1 in Talagrand and Xiao \cite{TX}. In the general case, we proceed
    as in Howroyd's proof of his Theorem 22 \cite{Howroyd}.  Since this is a
    lengthy calculation and not essential to the rest of the proof, we omit
    the details.

    We now demonstrate the a.s.\ lower bound, $\dimp X(E) \ge \gamma$. Since
    $\overline{X(E)}$ and $X(E)$ only differ by at most a countable set, it
    is sufficient to prove $\dimp \overline{X(E)}\ge \gamma$ almost surely.
    Observe that there are at most countably many points in $\overline{X(E)}$
    with the following property: each of them corresponds to a $t \in E$ such
    that $X$ has a jump at $t$, and $t$ cannot be approached from the right
    by the elements of $E$. Removing these isolated points from $\overline{X(E)}$
    yields a closed subset with the same packing dimension as $\dimp \overline{X(E)}$.
    Therefore, without loss of generality, we may and will assume that $\overline{X(E)}$
    is a.s.\ a closed set and every point in $\overline{X(E)}$ is
    the limit of a sequence $X(t_n)$ with $t_n \in E$.

    Since $\overline{X(E)}$ is a.s.\ closed, one can apply Baire's category theorem
    as in Tricot's proof of \cite[Proposition 3]{Tricot}. Thus it suffices to
    prove that almost surely
    \begin{equation}\label{Redu}
        \dimM \left[ \overline{X(E)}\cap B(a\,,r)\right]
        \ge \gamma\quad     \text{ for
        all $a \in \mathbf{Q}^d$ and $r \in \mathbf{Q}\cap (0, \infty)$},
    \end{equation}
    whenever $\overline{X(E)}\cap B(a\,,r) \ne \varnothing$.

    According to the already established first assertion \eqref{eq:dimM} of
    Theorem \ref{th:dimM},
    \begin{equation}\label{Eq:Unidim}
        \Pp\Big\{\dimM  X(E \cap (s, t)) = \BDim_\kappa
        \left[ E\cap (s\,,t) \right] \ \text{ for all }
        s < t \in \mathbf{Q}_+\Big\} = 1.
    \end{equation}
    Fix $a\in\mathbf{Q}^d$ and $r\in\mathbf{Q}\cap(0\,,\infty)$.
    It follows from \eqref{Eq:Unidim} that
    \begin{equation}\label{RHS}
        \dimM \left[ X(E)\cap B(a\,,r) \right]
        \ge \sup \BDim_\kappa \left[ E\cap (s\,,t) \right] \quad \text{a.s.},
    \end{equation}
    where the supremum is taken over all rationals $s,t>0$
    such that $X(E\cap (s\,,t))\subseteq B(a\,,r)$. By the aforementioned assumption
    on $\overline{X(E)}$ we see that, if $X(E) \cap B(a\,, r) \ne \varnothing$,
    then we can always find rationals $s, t >0$ such that $E\cap (s\,,t) \ne \varnothing$ and
    $X(E\cap (s\,,t))\subseteq B(a\,,r)$. This, together with \eqref{RHS}, implies that a.s.\
    $\dimM \left[ X(E)\cap B(a\,,r) \right] \ge \gamma$
    provided $\overline{X(E)}\cap B(a\,,r) \ne \varnothing$.

    Finally, we can choose a $\Pp$-null event such that the preceding holds, off that null event,
    simultaneously for all $a\in\mathbf{Q}^d$ and $r\in\mathbf{Q}\cap(0\,,\infty)$.
    This proves \eqref{Redu}, whence  $\dimp X(E)\ge\gamma$ a.s.;
    \eqref{eq:LastGoal} follows immediately.
\end{proof}

\affiliationone{%
Davar Khoshnevisan\\
Department of Mathematics\\
The University of Utah\\
155 South 1400 East, JWB 233\\
Salt Lake City, Utah 84105--0090\\
USA
\email{davar@math.utah.edu}}
\affiliationtwo{%
Ren\'e L.\ Schilling\\
Institut f\"ur Mathematische Stochastik\\
TU Dresden\\
D-01062 Dresden\\
Germany
\email{rene.schilling@tu-dresden.de}}
\affiliationthree{%
Yimin Xiao\\
Department of Statistics and Probability\\
A-413 Wells Hall\\
Michigan State University\\
East Lansing, MI 48824\\
USA
\email{xiao@stt.msu.edu}}
\end{document}